\DeclareMathAlphabet{\mathbbmsl}{U}{bbm}{m}{sl}
\definecolor{labelkey}{rgb}{1,0,0}
\def\bm#1{\mathpalette\bmstyle{#1}}
\def\bmstyle#1#2{\mbox{\boldmath$#1#2$}}
\DeclareMathAlphabet{\mathcalligra}{T1}{calligra}{m}{n}
\newcommand{\sh}{\kern -.4em\phantom{a}^{\mathbf{\sim}}}
\newcommand{\lra}{\longrightarrow}
\newcommand{\fpqc}{{\rm fpqc}}
\newcommand{\fppf}{{\rm fppf}}
\newcommand{\red}{{\rm red}}
\newcommand{\kbar}{\overline{k}}
\def\be{\kern -.1em}
\def\bbe{\kern -.07em}
\def\le{\kern 0.03em}
\def\lle{\kern 0.015em}
\def\lbe{\kern -.025em}
\def\llbe{\kern -.03em}
\newcommand{\Z}{{\mathbb Z}}
\newcommand{\Q}{{\mathbb Q}}
\newcommand{\N}{{\mathbb N}}
\newcommand{\G}{{\mathbb G}}
\newcommand{\spec}{\mathrm{ Spec}\,}
\newcommand{\krn}{\mathrm{Ker}\e}
\newcommand{\img}{\mathrm{Im}\e}
\newcommand{\Hom}{{\mathrm{Hom}}}
\def\e{\kern 0.08em}
\newcommand{\s}{\mathscr }
\newcommand{\mr}{\mathrm }
\newcommand{\pf}{{\rm pf}}
\numberwithin{equation}{section}
\newtheorem{lemma}[equation]{Lemma} 
\newtheorem{theorem}[equation]{Theorem}
\newtheorem{proposition-definition}[equation]{Proposition-Definition}
\newtheorem{corollary}[equation]{Corollary}
\newtheorem{proposition}[equation]{Proposition}
\theoremstyle{definition}
\theoremstyle{remark}
\newtheorem{remark}[equation]{Remark}
\newtheorem{remarks}[equation]{Remarks}
\newtheorem{example}[equation]{Example}
\newtheorem{caveat}[equation]{Caveat}
\definecolor{labelkey}{rgb}{1,0,0}
\begin{document}

\input xy     
\xyoption{all}

\title{On the perfection of schemes}

\author{Alessandra Bertapelle}
\address{Universit\`a degli Studi di Padova, Dipartimento di Matematica, via Trieste 63, I-35121 Padova}
\email{alessandra.bertapelle@unipd.it}

\author{Cristian D. Gonz\'alez-Avil\'es}
\address{Departamento de Matem\'aticas, Universidad de La Serena, Cisternas 1200, La Serena 1700000, Chile}
\email{cgonzalez@userena.cl}
\thanks{G.-A. is partially supported by Fondecyt grant 1160004.}
\date{\today}

\begin{abstract}  We present a detailed and  elementary construction of the inverse perfection of a scheme and discuss some of its main properties. We also establish a number of auxiliary results (for example, on inverse limits of schemes) which do not seem to appear in the literature.
\end{abstract}

\maketitle

\topmargin -1cm

\section{Introduction}

Let $p$ be a prime number and let $\mathbb F_{\be p}$ denote the field with $p$ elements.  
In the course of our review of the construction of the perfect Greenberg functor in \cite{gfr}, we were hampered by the lack of an adequate reference work on the subject of (inverse) perfections of $\mathbb F_{\be p}$-schemes. Although the classical reference \cite{gre} presents in some detail the construction of the inverse perfection $Y^{\lbe\pf}$ of an $\mathbb F_{\be p}$-scheme $Y$, it does not discuss its main properties. On the other hand, the relatively recent preprint \cite{bs} briefly discusses some of the main properties of the indicated construction (see \cite[Lemmas 3.4 and 3.8]{bs}, parts of which overlap with some of the results presented here), but it does not address the perfection of $\mathbb F_{\be p}$-group schemes. Our aim in this paper is to present a detailed and elementary construction of the inverse perfection of an $\mathbb F_{\be p}$-scheme and discuss some of its properties. The (inverse) perfection functor has played, a continues to play, a significant role in algebraic geometry (see, for example, \cite{ser1, ser2,bd,bw, pep,kl}). We believe that our presentation will be useful to all students and researchers that at some point in their studies will need to consider the (inverse) perfection of an $\mathbb F_{\be p}$-scheme.

\smallskip

We briefly indicate the contents of the individual Sections.

Section \ref{fl} presents some basic results on the fpqc and fppf topologies. These statements may be well-known to some readers but, to our knowledge, they do not appear in the literature. Section \ref{plim-sec} discusses certain basic properties of projective limits of schemes that, to our surprise, we could not find in the standard literature on the subject. In particular, Proposition \ref{lim-surj} shows that, if $k$ is any field, then the inverse limit functor is exact on certain types of ``Mittag-Leffler" short exact sequences of projective systems {\it in the category of commutative $k$-group schemes}.
Section \ref{pclos} is a detailed discussion of the construction of the {\it perfect closure} (or {\it direct perfection}) of an $\mathbb F_{\be p}$-algebra. The developments of Section \ref{pclos} are then extended to the category of $\mathbb F_{\be p}$-schemes in Section \ref{perf}. The final Section \ref{exa} discusses exactness properties of the inverse perfection functor on the category of group schemes over a perfect field of positive characteristic $p$.

\smallskip

We will write $\vert X\vert$ for the underlying topological space of a scheme $X$. Further, if $S$ is a scheme, we will write $(\mathrm{Sch}/S\e)$ for the category of $S$-schemes.

\section{The flat topologies}\label{fl}
Let $S$ be a scheme and let $\mathcal C$ be a full subcategory of $(\mathrm{Sch}/S\e)$ which is stable under (fiber) products and contains the identity morphism of $S$. 
Recall from \cite[\S2.3.2, pp.~27-28]{v} that an {\it fpqc morphism} is a faithfully flat morphism of schemes $f\colon X\to Y$ with the following property:
if $x$ is a point of $X$, then there exists an open neighborhood $U$ of $x$
such that the image $f(U)$ is open in $Y$ and the induced morphism $U\to f(U)$ is quasi-compact. See \cite[Proposition 2.33, p.~27]{v} for a list of equivalent properties. Clearly, a faithfully flat and quasi-compact morphism of schemes is an $\fpqc$ morphism. Further, the class of fpqc morphisms is stable under base change \cite[Proposition 2.35(v), p.~28]{v}. An {\it fppf morphism} is a faithfully flat morphism locally of finite presentation. By \cite[Proposition 2.35(iv), p.~28]{v}, every fppf morphism is an fpqc morphism. The fpqc (respectively, fppf) topology on $\mathcal C$ is the topology where the coverings are collections of flat morphisms $\{ X_{\alpha}\to X\}$ in $\mathcal C$ such that the induced morphism $\coprod X_{\alpha}\to X$ is an fpqc (respectively, fppf) morphism. Clearly, the fpqc topology is finer than the fppf topology.
If $\tau=\fpqc$ or $\fppf$, we will write $\mathcal C_{\tau}$ for the corresponding site and $\mathcal C^{\,\sh}_{\tau}$    for the category of sheaves of sets  on $\mathcal C_{\tau}$.  
 For either of the sites mentioned above, every representable presheaf is a sheaf \cite[Theorem 2.55, p.~34]{v} and the induced functor  
\[
h_{\lbe
S}\colon \mathcal C\to \mathcal C^{\e\sh}_{\tau},Y\mapsto \Hom_{S}(-,Y),
\]
is fully faithful. A sequence $1\to F\to G\to H\to 1$ of groups in $\mathcal C$  will be called {\it exact for the $\tau$ topology on $\mathcal C$} if the corresponding sequence of sheaves of groups, namely $1\to h_{\lbe S}(F\le)\to h_{\lbe S}(G\lle)\to h_{\lbe S}(H\lle)\to 1$, is exact, i.e., the first nontrivial map identifies $h_{\lbe S}(F\le)$ with the kernel of $h_{\lbe S}(G)\to h_{\lbe S}(H)$ and the latter map is {\it locally surjective}, i.e.,   for every $U\in\mathcal C$ and $s\in h_{\lbe S}(H)(U)$, $s$ can be locally lifted to  $h_{\lbe S}(G)(U)$ in the $\tau$ topology \cite[p.~ 122, lines 22-24]{mi1}.

\begin{remarks} Let $q\colon G\to H$ be a morphism of groups in $\mathcal C$.
\begin{enumerate}
\item[(a)] It is not difficult to check that, if $h_{\lbe S}(q)\colon h_{\lbe S}(G\lle)\to h_{\lbe S}(H\lle)$ is locally surjective for the $\tau$ topology, then $h_{\lbe S}(q)$ is an epimorphism of sheaves for the indicated topology \cite[IV, Definition 1.1]{sga3}, i.e., the induced map of sets $\mr{Hom}(h_{\lbe S}\lbe(H\lle), \mathcal F\e)\to \mr{Hom}(h_{\lbe S}\lbe(G\lle), \mathcal F\e)$ is injective for every $\mathcal F\in\mathcal C^{\,\sh}_{\tau}$. For the converse, see \cite[IV, Remark 4.4.8]{sga3}.
\item[(b)] We will show below that, if $h_{\lbe S}(q)$ is locally surjective for the $\tau$ topology, i.e., an epimorphism of sheaves in the $\tau$ topology (see (a)), then $q$ is a surjective morphism of schemes. However, the converse may fail to hold. For example, let $p$ be a prime number, set $k=\mathbb F_{\be p}$ and let $q\colon 0\to \alpha_{p}$ be the canonical morphism, where $\alpha_{p}=\spec\be(k[x]/(x^{\le p}))$ is the kernel of the Frobenius endomorphism $\G_{a,\le k}\to \G_{a,\le k}$. Since $\alpha_{p}$ is a one-point scheme, $q$ is clearly a surjective morphism of schemes. However, it is easy to check that $h_{\lbe S}(q)$, where $S=\spec k$, is not locally surjective. 
\end{enumerate}
\end{remarks}

\begin{lemma}\label{surj} Let $\,1\to F\overset{\!i}{\to} G\overset{\!q}{\to} H\to 1$ be a sequence of groups in $\mathcal C$ which is exact for the $\tau$ topology on $\mathcal C$, where $\tau=\fppf$ or $\fpqc$. Then $i$ identifies $F$ with the scheme-theoretic kernel of $q$, i.e., $F=\krn q=G\times_{H}S$, and $q$ is a surjective morphism of schemes.
\end{lemma}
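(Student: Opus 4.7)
The plan is to handle the two assertions separately.

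For the identification $F=\ker q$, I would form the scheme-theoretic kernel $K:=G\times_{H}S$, where $S\to H$ is the unit section. Since $\mathcal C$ is stable under fiber products and contains the identity of $S$, we have $K\in\mathcal C$, and by the universal property of fiber products the functor $h_{S}(K)$ is precisely the presheaf kernel of $h_{S}(G)\to h_{S}(H)$. On the other hand, exactness of $1\to h_{S}(F)\to h_{S}(G)\to h_{S}(H)$ says that $h_{S}(F)$ is the \emph{sheaf} kernel of $h_{S}(q)$; but kernels of sheaves of groups are computed sectionwise, so the sheaf kernel coincides with the presheaf kernel. Hence $h_{S}(F)\cong h_{S}(K)$ as sheaves, and the full faithfulness of $h_{S}$ recalled just before the lemma yields a unique isomorphism $F\cong K$ compatible with $i$.

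For the surjectivity of $|q|$, I would fix an arbitrary point $y\in |H|$ and exhibit a point of $G$ above it. Interpret $y$ as a morphism $\iota\colon\spec k(y)\to H$, i.e.\ as an element of $h_{S}(H)(\spec k(y))$. Local surjectivity of $h_{S}(q)$ for the $\tau$ topology supplies a $\tau$-covering $\{V_{\alpha}\to\spec k(y)\}$ over which $\iota$ lifts to morphisms $\tilde{\iota}_{\alpha}\colon V_{\alpha}\to G$ satisfying $q\circ\tilde{\iota}_{\alpha}=\iota\circ(V_{\alpha}\to\spec k(y))$. Because $\coprod V_{\alpha}\to\spec k(y)$ is faithfully flat, it is surjective on points, so at least one $V_{\alpha}$ is nonempty; pick any $v\in V_{\alpha}$ and let $x\in |G|$ be its image under $\tilde{\iota}_{\alpha}$. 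Then $q(x)$ equals the image of $v$ under $\iota\circ(V_{\alpha}\to\spec k(y))$, which is $y$. This shows $|q|$ is surjective.

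The main obstacle I anticipate is largely bookkeeping: one must take care that $G\times_{H}S$ genuinely lies in $\mathcal C$ (so that Yoneda can be invoked inside $\mathcal C^{\,\sh}_{\tau}$), and that the sheaf-theoretic and presheaf-theoretic kernels agree, which is a general fact about limits of sheaves of groups. Beyond these formalities, the surjectivity argument is a direct translation of the local lifting property into a statement about topological points, relying only on the fact that $\tau$-coverings are, by definition, faithfully flat and hence surjective.
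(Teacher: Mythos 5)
Your proposal is correct and follows essentially the same route as the paper: the kernel statement is Yoneda plus the fact that $G\times_{H}S$ represents the (section-wise) kernel of $h_{S}(q)$, and surjectivity is obtained by applying local surjectivity to a field-valued point $\spec\kappa(y)\to H$, using that the $\tau$-covering is surjective to find a nonempty member, and picking a point there. The only cosmetic difference is that the paper first reduces to the field-valued criterion of EGA I, Proposition 3.6.2, and then produces the extension $\kappa(y)$, whereas you argue directly on topological points; the substance is the same.
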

\begin{proof} Since $\krn q$ represents the functor $\krn\lbe(h_{\lbe S}(q)\colon h_{\lbe S}(G)\to h_{\lbe S}(H))=h_{S}(F\le)$, i.e., $h_{\lbe S}\lbe(\le\krn q)=h_{\lbe S}(F\le)$, the first assertion is clear. To show that $q\colon G\to H$ is  a surjective morphism of schemes, it suffices to check that, for every field $K$ and every morphism $h\colon\spec K\to H$, there exists an extension $K^{\le\prime}$ of $K$ and a morphism $g\colon\spec K^{\le\prime}\to G$ such that $q\circ g=h\circ j$, where $j\colon\spec K^{\le\prime}\to\spec K$ is the canonical morphism (see \cite[Proposition 3.6.2, p.~244]{ega1}). Since  $h_{\lbe S}(q)$ is locally surjective, there exist a  $\tau$ covering $\{u_{\alpha}\colon Y_{\alpha}\to\spec K\}$ in $\mathcal C$ and $S$-morphisms $g_{\alpha}\colon Y_{\alpha}\to G$ such that $q\circ g_{\alpha}=h\circ u_{\alpha}$ for every $\alpha$. Now, since $\coprod Y_{\alpha}\to\spec K$ is surjective, there exists an index $\alpha_{0}$ such that $Y_{\alpha_{0}}\neq\emptyset$. Choose $y\in Y_{\alpha_{0}}$ and let $i\colon \spec\kappa(y)\to Y_{\alpha_{0}}$ be the canonical morphism. Then $K^{\le\prime}=\kappa(y)$ and $g=g_{\alpha_{\le 0}}\circ i\colon\spec K^{\le\prime}\to G$ are the required extension and morphism, respectively.
\end{proof}

\begin{lemma}\label{tau-ex} Let $q\colon G\to H$ be a morphism of groups in $\mathcal C$.
If $q$ is an fppf (respectively, fpqc) morphism, then the sequence
\[
1\to \krn q\to G\overset{\!q}\to
H\to 1
\]
is exact for the fppf (respectively, fpqc) topology on $\mathcal C$. 
\end{lemma}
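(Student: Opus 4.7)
The proof splits into two tasks dictated by the definition of exactness for a $\tau$-topology given in the excerpt: identifying $\krn q$ with the kernel of $h_{\lbe S}(q)$ as sheaves, and showing $h_{\lbe S}(q)$ is locally surjective for $\tau$.

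For the first task, the kernel $\krn q = G\times_{\lbe H}S$ exists in $\mathcal C$ because $\mathcal C$ is stable under fiber products and contains the identity of $S$. By Yoneda, $h_{\lbe S}(\lbe\krn q)$ represents the presheaf kernel of $h_{\lbe S}(q)$; since kernels are limits and $\mathcal C^{\,\sh}_\tau\hookrightarrow\mathcal C^{\wedge}$ preserves limits, this presheaf kernel is also the sheaf kernel.

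For the second task, the plan is to exhibit a single-morphism $\tau$-covering that does the lifting. Given $U\in\mathcal C$ and a section $s\in h_{\lbe S}(H)(U)=\Hom_{S}(U,H)$, form the fiber product
\[
V\le :=\le G\times_{\lbe H}\lbe U
\]
with canonical projections $\pi\colon V\to U$ and $g\colon V\to G$; by construction $q\circ g=s\circ\pi$. Since $\mathcal C$ is stable under fiber products, $V\in\mathcal C$. Since $q$ is an fppf (resp.\ fpqc) morphism and this class is stable under base change (the fpqc case being the stability statement cited in the section from \cite[Proposition 2.35(v)]{v}, and the fppf case being standard), $\pi$ is fppf (resp.\ fpqc). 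Consequently $\{\pi\colon V\to U\}$ is a singleton $\tau$-covering of $U$ in $\mathcal C$, and $g\in h_{\lbe S}(G)(V)$ satisfies $h_{\lbe S}(q)(g)=q\circ g=s\circ\pi=s|_{V}$. This is exactly the local lifting condition required by the definition recalled from \cite[p.~122]{mi1}.

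The argument is essentially formal once one notices that the base change $V\to U$ of $q$ already supplies a \emph{single} $\tau$-covering together with a tautological lift. The only point that deserves attention is the verification that $V$ lies in $\mathcal C$ and that $\pi$ is a $\tau$-morphism, both of which follow directly from the standing hypotheses on $\mathcal C$ and the base-change stability of the classes of fppf and fpqc morphisms; I expect no further obstacle.
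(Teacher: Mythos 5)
Your proposal is correct and follows essentially the same route as the paper: the paper also reduces to local surjectivity (the kernel identification being formal via Yoneda) and then base-changes $q$ along a given section $u\colon Z\to H$ to obtain the single fppf (respectively, fpqc) covering $G\times_{H}Z\to Z$ with the tautological lift given by the projection to $G$. No gaps.
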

\begin{proof} It suffices to check that $h_{\lbe S}(q)$ is locally surjective in the $\fppf$ (respectively, $\fpqc$) topology. Let $u\colon Z \to H$ be a morphism in $\mathcal C$.
If $q$ is an fppf (respectively, fpqc) morphism, then $q\lbe\times_{H}\lbe Z\colon G\times_{H}Z\to Z$ is an fppf (respectively, fpqc) morphism. Thus $q\times_{H}Z$ is an fppf (respectively, fpqc) covering and $q\circ{\rm{pr}}_{1}=u\circ(q\times_{H}Z\e)$, where ${\rm{pr}}_{1}\colon  G\times_{H}Z\to G$ is the first projection. The lemma is now clear.
\end{proof}

In the next two statements, $\kbar$ is a fixed algebraic closure of a field $k$.

\begin{proposition}\label{fppf} Let $k$ be a field and let $q\colon G\to H$ be a flat morphism of $k$-group schemes locally of finite type. 
\begin{enumerate}
\item[(i)] If $q$ is surjective, then $q$ is an fppf morphism.
\item[(ii)] If $q\big(\e\kbar\e\big)\colon G\lbe\big(\e\kbar\e\big)\to
H\lbe\big(\e\kbar\e\big)$ is surjective, then $q$ is surjective.
\end{enumerate}
\end{proposition}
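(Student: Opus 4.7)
The plan is to treat the two parts separately. Part (i) will be essentially a bookkeeping exercise once one notes that a morphism between $k$-schemes locally of finite type is automatically locally of finite presentation when $k$ is a field (hence Noetherian). Part (ii) will rest on two standard scheme-theoretic facts about $k$-schemes locally of finite type: the openness of the image of a flat morphism locally of finite presentation, and the Jacobson property of $H$.

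For part (i), I would first observe that since $k$ is Noetherian, any $k$-scheme locally of finite type is locally of finite presentation over $\spec k$. Hence $G$ and $H$ are locally of finite presentation over $\spec k$, and the $k$-morphism $q\colon G\to H$ is itself locally of finite presentation (EGA IV.1.4.3(v)). Combined with the standing hypothesis that $q$ is flat and the assumption in (i) that $q$ is surjective, this shows that $q$ is faithfully flat and locally of finite presentation, i.e., an fppf morphism. This disposes of (i).

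For part (ii), the main input is that a flat morphism locally of finite presentation is universally open (EGA IV.2.4.6). As noted above, $q$ is locally of finite presentation, and it is flat by hypothesis, so $q(G)$ is an open subset of $|H|$. The second input is that since $H$ is locally of finite type over the field $k$, it is a Jacobson scheme, and its closed points are precisely those $h\in H$ with $\kappa(h)$ algebraic over $k$. Consequently, the image of every $\kbar$-point of $H$ is a closed point, and conversely every closed point of $H$ arises as the image of some $\kbar$-point via an embedding $\kappa(h)\hookrightarrow\kbar$. The surjectivity of $q\big(\e\kbar\e\big)$ therefore forces $q(G)$ to contain every closed point of $H$.

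To conclude, I would use the Jacobson property in the form that every non-empty closed subset of $H$ contains a closed point of $H$. Applied to the closed complement $H\smallsetminus q(G)$, which by the previous paragraph contains no closed points of $H$, this yields $H\smallsetminus q(G)=\emptyset$, whence $q$ is surjective as a morphism of schemes. The only non-trivial step is the openness of $q(G)$; the rest is a topological argument using Jacobson-type density.
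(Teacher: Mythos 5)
Your proof is correct, and for part (ii) it takes a genuinely different route from the paper. Part (i) is the same in both treatments (locally of finite type over a field implies locally of finite presentation, so flat plus surjective gives fppf). For part (ii), the paper first reduces to $k=\kbar$, cites the finite type case from Demazure--Gabriel, and then exploits the group structure: the image of the identity component $G^{\e 0}$ is open (flatness) and closed (quasi-compactness of $q^{\le 0}\colon G^{\e 0}\to H^{\e 0}$ together with the SGA3 result on images of homomorphisms of finite type group schemes), translation by $k$-points handles the other components, and very density of $H(k)$ plus a connectedness argument concludes. You instead use only that $q(G)$ is open (flat and locally of finite presentation implies open) together with the Jacobson property of schemes locally of finite type over a field: closed points of $H$ are exactly the points with residue field finite over $k$, hence lift to $\kbar$-points, so surjectivity of $q\big(\e\kbar\e\big)$ puts every closed point of $H$ in $q(G)$, and the closed complement $H\smallsetminus q(G)$, containing no closed point, must be empty. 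This buys you a cleaner and more general statement --- it never uses the group structure, needs no reduction to $\kbar$, and applies verbatim to any flat morphism of $k$-schemes locally of finite type with $q\big(\e\kbar\e\big)$ surjective --- at the cost of invoking the Jacobson machinery (EGA IV, \S 10.4) rather than the group-scheme results the paper relies on; the only slight inaccuracy is your citation for $q$ being locally of finite presentation, where the relevant fact is that a morphism locally of finite type between locally Noetherian schemes is locally of finite presentation (the paper cites EGA I, 6.2.1.2 and 6.2.3(v) for this).
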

\begin{proof} Since $G$ is locally of finite type, $q$ is locally of finite presentation by \cite[6.2.1.2 and Proposition 6.2.3(v), p.~298]{ega1} and assertion (i) is clear. If $G$ and $H$ are of finite type, assertion (ii) is well-known \cite[I, \S3, Corollary 6.10, p.~96]{dg}. To prove (ii) when $G$ and $H$ are only locally of finite type, we may assume that $k=\kbar$ by \cite[Proposition~3.6.4, p.~245]{ega1}. 
Let $G^{\e 0}$ denote the identity component of $G$.
Since $q$ is an open morphism, $q(G^{\e 0})$ is open in $H$. On the other hand, since both $G^{\e  0}$ and $H^{\e 0}$ are of finite type by \cite[${\rm VI_{A}}$, Proposition 2.4(ii)]{sga3}, the induced morphism $q^{\le 0}\colon G^{\e 0}\to H^{\e 0}$ is quasi-compact. Consequently, $q(G^{\e 0})=q^{\le 0}(G^{\e 0}\le)$ is closed in $H$ by \cite[${\rm VI_{A}}$, Proposition 2.5.2(a)]{sga3}. We conclude that $q(G^{\le 0}\le)$ is both open and closed in $H$. Now, if $C$ is a connected component of $G$, then $C=x\le G^{\e 0}$ for some $x\in G(k)$ \cite[${\rm{VI}}_{\rm A}$, proof of Corollary 2.4.1]{sga3}, whence $q(C\le)$ is open and closed in $H$. Now let $y\in H$ and let $C^{\e\prime}$ be the connected component of $H$ which contains $y$. Since $H(k)$ is very dense in $H$ \cite[$\text{IV}_{3}$, Corollary 10.4.8]{ega}, there exists a point $z$ in $C^{\e\prime}\cap H(k)= C^{\e\prime}\cap q(G(k))$. Let $x\in G(k)$ be such that $z=q(x)$. If $C$ is the connected component of $G$ which contains $x$, then $z\in  C^{\e\prime}\cap q(C)$, whence $C^{\e\prime}\cap q(C)\neq\emptyset$. Thus $C^{\e\prime}\subseteq q(C)$ since $q(C)$ is open and closed in $H$. Consequently, $y\in q(C)\subseteq q(G)$, which completes the proof.
\end{proof}

\begin{corollary}\label{ff} Let $k$ be a field and let $q\colon G\to H$ be a flat morphism of $k$-group schemes locally of finite type. Assume that $q\big(\e\kbar\e\big)\colon G\lbe\big(\e\kbar\e\big)\to H\lbe\big(\e\kbar\e\big)$ is surjective. 
Then the sequence of $k$-group schemes
\[
1\to \krn q\to G\overset{\!q}\to
H\to 1
\]
is exact for both the fppf and fpqc topologies on $(\mr{Sch}/k)$.
\end{corollary}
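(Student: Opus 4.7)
The plan is to combine Proposition \ref{fppf} with Lemma \ref{tau-ex} in a direct chain. First, since $q\big(\le\kbar\le\big)$ is surjective, Proposition \ref{fppf}(ii) gives that $q\colon G\to H$ is surjective as a morphism of schemes. Since $q$ is flat and $G$ is locally of finite type over $k$, Proposition \ref{fppf}(i) then upgrades $q$ to an fppf morphism.

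Next, I would invoke the fact, noted in Section \ref{fl}, that every fppf morphism is automatically an fpqc morphism (by \cite[Proposition 2.35(iv), p.~28]{v}). Thus $q$ is simultaneously an fppf and an fpqc morphism of groups in $(\mathrm{Sch}/k)$.

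Finally, I would apply Lemma \ref{tau-ex} with $\tau=\fppf$ and with $\tau=\fpqc$ respectively, to conclude that the sequence
\[
1\to \krn q\to G\overset{\!q}\to H\to 1
\]
is exact for both the fppf and the fpqc topology on $(\mathrm{Sch}/k)$.

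I do not expect a real obstacle here: the corollary is essentially a packaging of Proposition \ref{fppf} and Lemma \ref{tau-ex}. The only point requiring minor care is checking that the category $\mathcal C=(\mathrm{Sch}/k)$ satisfies the running hypotheses of Section \ref{fl} (stability under fiber products and containing $\id_{\spec k}$), which is immediate.
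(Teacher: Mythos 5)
Your argument is correct and is exactly the paper's route: Proposition \ref{fppf}(ii) gives surjectivity of $q$, Proposition \ref{fppf}(i) upgrades it to an fppf (hence fpqc) morphism, and Lemma \ref{tau-ex} applied with $\tau=\fppf$ and $\tau=\fpqc$ yields the exactness; the paper compresses precisely this chain into its one-line proof. No issues.
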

\begin{proof} This is immediate from the proposition and Lemma \ref{tau-ex}.
\end{proof}

\section{Projective limits of schemes}\label{plim-sec}
To our knowledge, the results presented in this Section do not appear in the standard literature on projective limits of schemes (e.g., in \cite[$\text{IV}_{3}$, \S8]{ega}). These results will be used in Section \ref{perf} to extend to the category of schemes the ring-theoretic constructions of Section \ref{pclos}.

In this Section, $k$ denotes an arbitrary field.

If $S$ is a scheme, $\Lambda$ is a directed set and $(X_{\lbe\lambda},u_{\lambda,\e\mu}\e ; \lambda,\e\mu\in\Lambda)$ is a projective system of $S$-schemes with affine transition morphisms, then $X=\varprojlim X_{\lbe\lambda}$ exists in the category of $S$-schemes. Further, for every $S$-scheme $Z$, there exists a canonical bijection 
\begin{equation}\label{plim}
\mathrm{Hom}_{\le S}\big(Z,\varprojlim X_{\lbe\lambda}\big)=\varprojlim\mathrm{Hom}_{\le S}(Z,X_{\lbe\lambda}).
\end{equation}
See \cite[$\text{IV}_{3}$, Proposition 8.2.3 and Lemma 8.2.4]{ega}.

\begin{proposition}\label{lim-prop} Let $(X_{n})$ and $(\e Y_{n})$ be projective systems of $k$-schemes with index set $\N$ and affine transition morphisms and let  $X=\varprojlim X_{\lbe n}$ and $Y=\varprojlim Y_{\lbe n}$ be the corresponding limits in the category of $k$-schemes. Further, let $(f_{n})\colon (X_{n})\to (\e Y_{n})$ be a morphism of projective systems and let $f=\varprojlim f_{n}\colon X\to Y$ be its limit. Consider, for a morphism of $k$-schemes, the property of being:
\begin{enumerate}
\item[(i)] quasi-compact;
\item[(ii)]  quasi-separated;
\item[(iii)] separated;
\item[(iv)] affine;
\item[(v)] a closed immersion;
\item[(vi)] flat.
\end{enumerate}
If $\bm{P}$ denotes one of above properties and there exists an $n_{\le 0}\in \N$ such that the $k$-morphisms $f_{n}\colon X_{n}\to Y_{n}$ have property $\bm{P}$ for all $n\geq n_{\le 0}$, then $f\colon X\to Y$ has property $\bm{P}$ as well.
\end{proposition}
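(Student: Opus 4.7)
First, by passing to tail systems we may assume $n_{0}=0$, so that $f_{n}$ has property $\bm{P}$ for every $n\in\N$. The key preliminary observation is that the projections $p_{n}\colon X\to X_{n}$ and $q_{n}\colon Y\to Y_{n}$ are themselves affine morphisms: over any affine open $\spec A\subseteq X_{n}$ the preimages in $X_{m}$ for $m\geq n$ are affine (by affineness of the transition maps $u_{n,m}$), say $\spec B_{m}$, and then by \eqref{plim} the preimage of $\spec A$ in $X$ is $\spec(\varinjlim_{m\geq n} B_{m})$. A direct consequence is that $Y$ admits an open covering by affines of the form $V=q_{n}^{-1}(V_{n})$ with $V_{n}\subseteq Y_{n}$ affine, and for any such $V$ a diagram chase gives $f^{-1}(V)=p_{n}^{-1}(f_{n}^{-1}(V_{n}))$.

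With this setup, the properties (i), (iv), (v), (vi) will be checked by direct local arguments on the above cover. For (iv): $f_{n}^{-1}(V_{n})$ is affine because $f_{n}$ is affine, and its preimage under the affine $p_{n}$ is again affine. For (i): $f_{n}^{-1}(V_{n})$ is quasi-compact, and $p_{n}$ is quasi-compact (being affine), so $f^{-1}(V)$ is quasi-compact. For (v): (iv) already shows that $f$ is affine, so it remains to check that the ring map $\mathcal{O}(V)\to\mathcal{O}(f^{-1}(V))$ is surjective, which follows from writing it as the filtered colimit over $m\geq n$ of the surjections $\mathcal{O}(v_{n,m}^{-1}(V_{n}))\to\mathcal{O}(f_{m}^{-1}(v_{n,m}^{-1}(V_{n})))$. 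For (vi): flatness is local on source and target, so one further covers $X$ by affines $W=p_{n}^{-1}(W_{n})$ with $W_{n}\subseteq f_{n}^{-1}(V_{n})$ affine; the resulting map $\mathcal{O}(V)\to\mathcal{O}(W)$ is a filtered colimit of flat ring maps, hence flat.

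For (ii) and (iii) the plan is to reduce to (i) and (v) respectively via the diagonal. One has $X\times_{Y}X=\varprojlim(X_{n}\times_{Y_{n}}X_{n})$ and $\Delta_{f}=\varprojlim\Delta_{f_{n}}$, so that once the transition morphisms in the system $(X_{n}\times_{Y_{n}}X_{n})$ are shown to be affine, the cases (v) and (i) already handled can be applied to the system $(\Delta_{f_{n}})$, which is quasi-compact (respectively a closed immersion) for $n\geq n_{0}$, to conclude that $\Delta_{f}$ is quasi-compact (respectively a closed immersion). The main obstacle I anticipate lies precisely in this affineness verification: the transition $X_{n+1}\times_{Y_{n+1}}X_{n+1}\to X_{n}\times_{Y_{n}}X_{n}$ should be factored through the closed immersion $X_{n+1}\times_{Y_{n+1}}X_{n+1}\hookrightarrow X_{n+1}\times_{Y_{n}}X_{n+1}$ (obtained by pullback from the diagonal of the affine, hence separated, morphism $Y_{n+1}\to Y_{n}$) followed by two successive base changes of the affine morphism $u_{n,n+1}$, each factor being affine by stability of affineness under composition and base change.
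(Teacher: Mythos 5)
Your proof is correct, but it follows a genuinely different route from the paper's for most of the cases. For properties (i)--(iv) the paper does not argue locally at all: it uses the commutative square formed by $f$, $f_{n}$ and the canonical projections $X\to X_{n}$, $Y\to Y_{n}$, which are affine (hence quasi-compact and separated) by EGA $\mathrm{IV}_{3}$, (8.2.2); since the composite $X\to X_{n}\to Y_{n}$ then has the relevant property as soon as a \emph{single} $f_{n}$ does, the standard cancellation results (EGA II, 1.6.2, together with the permanence statements in EGA I) give the property for $f$. In particular (ii) and (iii) come out directly, with no need for your diagonal reduction; your reduction is nonetheless sound, at the cost of the extra verifications that $X\times_{Y}X=\varprojlim\,(X_{n}\times_{Y_{n}}X_{n})$ via \eqref{plim}, that the transition morphisms of this system are affine, and that $\Delta_{f}=\varprojlim\Delta_{f_{n}}$ --- and your factorization of the transition map through the closed immersion pulled back from the diagonal of the affine, hence separated, morphism $Y_{n+1}\to Y_{n}$ is exactly the right way to settle the affineness point you flagged. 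For (v) and (vi) your argument and the paper's essentially coincide: both localize to the affine situation and use, respectively, exactness of filtered colimits (a colimit of surjections is surjective) and the fact that a compatible filtered colimit of flat ring maps is flat; for the latter you should cite a precise reference (the paper uses Bourbaki, Commutative Algebra, I, \S 2, no.~7, Prop.~9) rather than treat it as immediate. The trade-off: the paper's cancellation argument is shorter and shows that for (i)--(iv) one good index $n$ suffices, while your local argument is more self-contained and makes the explicit structure of the limit (affine projections, affine preimages, colimit descriptions of sections) visible, which is also what the paper's own proof of (v)--(vi) relies on.
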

\begin{proof} For every  $n\in\N$, there exists a commutative diagram
\[
\xymatrix{X\ar[d]\ar[r]^{f}& Y\ar[d]\\
X_{n}\ar[r]^{f_{n}}&
Y_{n},}
\]
where the vertical morphisms are affine \cite[$\text{IV}_{3}$ (8.2.2)]{ega} and therefore quasi-compact and separated. Consequently, if a {\it single} morphism $f_{n}$ has one of the properties (i)-(iv), then $f$ has the same property by \cite[II, Proposition 1.6.2]{ega} and \cite[Propositions 5.3.1(v), p.~279, 6.1.5(v), p.~291, and 6.1.9(v), p.~294]{ega1}. For (v),
we may localize \cite[Corollary 4.2.4(ii), p.~262]{ega1} to reduce to the case $(f_{n})=(\spec\phi_{n})$, where $(\phi_{n})\colon(B_{n})\to(A_{n})$ is a morphism of direct systems of $k$-algebras and the maps $\phi_{n}\colon B_{n}\to A_{n}$ are surjective homomorphisms of $k$-algebras for every $n\geq n_{\le 0}$ \cite[Remark 4.2.1.1, p.~260]{ega1}. Since the direct limit functor is exact on the category of $k$-algebras, the homomorphism of $k$-algebras $\varinjlim\phi_{n}\colon \varinjlim B_{n}\to
\varinjlim A_{n}$ is surjective. This completes the proof for property (v). In the case of property (vi), the proposition follows (after localizing, as above) from \cite[I, \S2, no.~7, Proposition 9, p.~20]{bou}.
\end{proof}

\smallskip

\begin{remarks}\label{lim-rems}\indent
\begin{enumerate}
\item[(a)] In the setting of the proposition, if $(X_{n})$ and $(\e Y_{n}\lbe)$  are projective systems of $k$-group schemes (in particular, their transition morphisms are morphisms of $k$-group schemes, i.e., homomorphisms), then $f\colon X\to Y$ is a morphism of $k$-group schemes. This follows from \eqref{plim}.

\item[(b)] Clearly, properties of a morphism whose definition includes the condition of being (locally) of finite type (e.g., proper, smooth, quasi-projective, etc.) are not preserved in the limit.
\end{enumerate}
\end{remarks}

In general, open immersions are not preserved in the limit, as Example \ref{exm} below shows. However, open and arbitrary immersions are preserved in the limit in a particular case that will be relevant in the next section.

\begin{proposition}\label{lim-prop2}  In the setting of Proposition {\rm \ref{lim-prop}}, assume in addition that the transition maps of the systems $(X_{n})$ and $(\e Y_{n})$ are homeomorphisms. If, for some $n_{\le 0}\in\N$, $f_{n}\colon X_{n}\to Y_{n}$ is an immersion (respectively, open immersion) for every $n\geq n_{\le 0}$, then the limit morphism $f\colon X\to Y$ is an immersion (respectively, open immersion).
\end{proposition}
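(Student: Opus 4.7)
The plan is to handle the open immersion case first and then deduce the general case by factoring each $f_{\lbe n}$ as a closed immersion followed by an open immersion.

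For the open immersion case, write $U_{\lbe n}\subseteq Y_{\lbe n}$ for the open subscheme whose underlying set is $f_{\lbe n}(X_{\lbe n})$, so that $f_{\lbe n}=j_{\lbe n}\circ\phi_{\lbe n}$ with $\phi_{\lbe n}\colon X_{\lbe n}\overset{\sim}{\to}U_{\lbe n}$ an isomorphism and $j_{\lbe n}\colon U_{\lbe n}\hookrightarrow Y_{\lbe n}$ the canonical open inclusion. First one checks that $(U_{\lbe n})$ inherits the structure of a projective system with affine homeomorphism transitions: for a transition $v\colon Y_{m}\to Y_{\lbe n}$ with $m\ge n\ge n_{\le 0}$, commutativity of the defining square together with the surjectivity of $X_{m}\to X_{\lbe n}$ yields $v(U_{m})=U_{\lbe n}$, whence the bijectivity of $v$ forces $v^{-1}(U_{\lbe n})=U_{m}$; hence $v$ restricts to a homeomorphism $U_{m}\to U_{\lbe n}$, which is affine as the base change of $v$ along $j_{\lbe n}$. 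Setting $U:=\varprojlim U_{\lbe n}$, the isomorphisms $\phi_{\lbe n}$ assemble via \eqref{plim} to an isomorphism $X\overset{\sim}{\to}U$. To see that the induced morphism $U\to Y$ is an open immersion, fix $n\ge n_{\le 0}$ and verify via \eqref{plim} that the natural morphism $U\to Y\lbe\times_{Y_{\lbe n}}\lbe U_{\lbe n}$ is an isomorphism: a morphism from a test scheme $Z$ to the fibre product amounts to a compatible family $(g_{m}\colon Z\to Y_{m})$ such that $g_{n}$ factors through $U_{\lbe n}$, and the identities $v^{-1}(U_{\lbe n})=U_{m}$ (for $m\ge n$) and $v(U_{\lbe n})=U_{m}$ (for $m\le n$) force every $g_{m}$ to factor through $U_{m}$, which is the same data as a morphism into $U$. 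Consequently $U\to Y$ is the base change of the open immersion $U_{\lbe n}\hookrightarrow Y_{\lbe n}$ along $Y\to Y_{\lbe n}$, hence itself an open immersion.

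For the general immersion case, factor each $f_{\lbe n}$ (for $n\ge n_{\le 0}$) as $X_{\lbe n}\overset{c_{n}}{\to}V_{\lbe n}\overset{j_{\lbe n}}{\hookrightarrow}Y_{\lbe n}$, where $V_{\lbe n}:=Y_{\lbe n}\setminus\big(\overline{f_{\lbe n}(X_{\lbe n})}\setminus f_{\lbe n}(X_{\lbe n})\big)$ is the largest open of $Y_{\lbe n}$ in which $f_{\lbe n}(X_{\lbe n})$ is closed, $j_{\lbe n}$ is the inclusion, and $c_{n}$ is the associated closed immersion. Since each transition of $(\e Y_{\lbe n})$ is a homeomorphism carrying $f_{m}(X_{m})$ bijectively onto $f_{\lbe n}(X_{\lbe n})$, it carries the corresponding closures onto each other and hence induces an affine homeomorphism $V_{m}\to V_{\lbe n}$; the open immersion case then applies to $(V_{\lbe n})\hookrightarrow(\e Y_{\lbe n})$ and shows that $V:=\varprojlim V_{\lbe n}\to Y$ is an open immersion. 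On the other hand, Proposition \ref{lim-prop}(v) applied to $(c_{n})$ shows that $X\to V$ is a closed immersion, whence $f\colon X\to V\to Y$ is an immersion. The main technical point to watch is the identity $v^{-1}(U_{\lbe n})=U_{m}$ (and its analogue for $V_{\lbe n}$), which relies crucially on the transitions being bijective and cannot be weakened to mere continuity.
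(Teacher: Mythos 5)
Your proof is correct, but it follows a genuinely different route from the paper's. The paper argues stalkwise: since the transition maps are homeomorphisms, \cite[$\text{IV}_{3}$, Proposition 8.2.9]{ega} identifies $|X|$ with $|X_{n}|$ and $|Y|$ with $|Y_{n}|$, so $|f|$ is identified with $|f_{n}|$; then by the local-ring criterion for (open) immersions \cite[Proposition 4.2.2, p.~260]{ega1} it suffices to check that $\mathcal O_{\le Y\be,\,f(x)}\to\mathcal O_{X,\e x}$ is surjective (respectively bijective), which follows from $\mathcal O_{X}=\varinjlim\psi_{n}^{*}(\mathcal O_{X_{n}})$ \cite[$\text{IV}_{3}$, Corollary 8.2.12]{ega} because a filtered colimit of surjections (bijections) is a surjection (bijection). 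You instead work with images: in the open case you show the open subschemes $U_{n}=f_{n}(X_{n})$ are carried onto one another by the (bijective) transitions, so that $U=\varprojlim U_{n}$ is canonically $Y\times_{Y_{n}}U_{n}$, i.e.\ the base change of a single open immersion, and in the general case you factor each $f_{n}$ as a closed immersion into the largest open $V_{n}$ in which its image is closed, apply the open case to $(V_{n})$ and Proposition \ref{lim-prop}(v) to the closed parts. Both arguments use the homeomorphism hypothesis in an essential way (the paper to identify the underlying spaces, you to get $v^{-1}(U_{n})=U_{m}$ and its analogue for $V_{n}$); your version is a bit longer and leans on the factorization of immersions and a functor-of-points identification of the limit of opens, but it avoids the stalk-level characterization of immersions and the colimit description of $\mathcal O_{X}$, whereas the paper's treatment handles both cases uniformly in a few lines. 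Your closing remark that the bijectivity of the transitions is the crucial point is exactly right, and is consistent with Example \ref{exm}, which shows the statement fails without it.
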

\begin{proof} By \cite[$\text{IV}_{3}$, Proposition 8.2.9]{ega}, we may identify $|X|$ and $|X_{n}|$ for every $n\in\N$, and similarly for $Y$. Under these identifications, $|f|\colon|X|\to|Y|$ is identified with $|f_{n}|\colon|X_{n}|\to|Y_{n}|$ for every $n\in\N$. Thus, by \cite[Proposition 4.2.2, p.~260]{ega1}, it suffices to check that $f$ induces a surjection (respectively, bijection) $\mathcal O_{\le Y,f(x)}\to \mathcal O_{X,x}$ for every $x\in X$. By \cite[$\text{IV}_{3}$, Corollary 8.2.12]{ega} and the identifications made above, there exists a canonical isomorphism
\[
\mathcal O_{X}=\varinjlim_{\e n\e\geq\e n_{\le 0}}\!\psi_{n}^{*}(\mathcal O_{\lbe X_{n}}),
\]
where $\psi_{n}\colon X\to X_{n}$ is the canonical projection, and similarly for $Y$.
Thus, since $\mathcal O_{\e Y_{n},\e f(x)}\to \mathcal O_{X_{n},\e x}$ is a surjection (respectively, bijection) for every $n\geq n_{\le 0}$, the direct limit morphism $\mathcal O_{\le Y,\e f(x)}\to \mathcal O_{X,\e x}$ has the same properties.
\end{proof}

\begin{example}\label{exm} In general, as noted above, open immersions are not preserved in the limit, as the following example shows (compare with \cite[$\text{IV}_{3}$, comment preceding (8.3.9)]{ega}). Let $A$ be an integral domain which is not a field and let $K$ be its field of fractions. Note that $K$ is the inductive limit of the localizations $A_{\lbe f}$ as $f$ ranges over the set $A\setminus\{0\}$ ordered by divisibility. Set $X_{\be f}=\spec A_{\lbe f}$ and $Y_{\be f}=\spec A$ for every $f\in A\setminus\{0\}$ and consider the morphism of projective systems (with evident transition morphisms) $(X_{\be f})\to (\e Y_{\be f})$, where each map $X_{\be f}\to Y_{\be f}$ is the canonical open immersion $\spec A_{\lbe f}\to\spec A$. Then the limit morphism is the canonical morphism $\spec K\to\spec A$, which is not, in general, an open immersion. To obtain an example with index set $\N$, choose $A=\Q[\e t_{m}\e;\e m\in\N\e]$ and, for every $n\in\N$, set $f_{n}=\prod_{\le m=1}^{\e n}\! t_{m}$. Then, choosing $X_{\lbe f_{n}}=\spec A_{\lbe f_{n}}$ and $Y_{\be f_{n}}=\spec A$ as above, we obtain the desired example.
\end{example}

\begin{proposition}\label{krull0} Let $(X_{n})$ be a projective system of finite $k$-schemes with index set $\N$. Then $X=\varprojlim X_{n}$ is an affine $k$-scheme of dimension zero and every residue field of $X$ is an algebraic extension of $k$.
\end{proposition}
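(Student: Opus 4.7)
The plan is to reduce the proposition to a short commutative algebra computation by passing to the affine picture. First I would observe that every finite $k$-scheme is affine, so we may write $X_{n}=\spec A_{n}$ where each $A_{n}$ is a finite-dimensional $k$-algebra. Since finite morphisms between finite $k$-schemes are affine, the transition morphisms of the system $(X_{n})$ are affine, and hence the projective limit exists in the category of $k$-schemes by the discussion at the beginning of Section \ref{plim-sec}. Moreover, the limit is the affine scheme $X=\spec A$, where $A=\varinjlim A_{n}$ is the direct limit (in the category of $k$-algebras) of the system $(A_{n})$ dual to $(X_{n})$. This already disposes of the affineness claim.

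Next I would verify that $A$ is integral over $k$. Given $a\in A$, there exists some $n$ and some $a_{n}\in A_{n}$ whose image in $A$ is $a$. Because $A_{n}$ is a finite-dimensional $k$-vector space, the powers $1,a_{n},a_{n}^{\e 2},\dots$ are linearly dependent over $k$, so $a_{n}$ satisfies a nontrivial polynomial relation over $k$. Its image $a$ therefore satisfies the same relation, so every element of $A$ is algebraic over $k$. Equivalently, $A$ is integral over $k$.

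The remaining assertions are then purely formal. For any prime $\mathfrak p$ of $A$, the quotient $A/\mathfrak p$ is an integral domain which is integral over the field $k$; any such domain is itself a field. Hence every prime of $A$ is maximal, so $\dim A=0$, which gives that $X$ has dimension zero. Moreover, for every $x\in X$ the local ring $\mathcal O_{X\lbe,\e x}$ is already a field (its maximal ideal $\mathfrak p_{\lbe x}A_{\mathfrak p_{\lbe x}}$ vanishes since $\mathfrak p_{\lbe x}$ is minimal), so the residue field $\kappa(x)=A/\mathfrak p_{\lbe x}$ is a quotient of $A$ and therefore integral, i.e., algebraic, over $k$.

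There is no serious obstacle here; the only minor point to handle carefully is the identification $\varprojlim\spec A_{n}=\spec\varinjlim A_{n}$, which is the standard anti-equivalence between affine schemes and commutative rings combined with the existence of limits of schemes along affine transition morphisms recalled at the start of Section \ref{plim-sec}. Everything else is a one-line application of the fact that an integral domain integral over a field is a field.
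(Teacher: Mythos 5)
Your argument follows essentially the same route as the paper's proof: write $X_{n}=\spec A_{n}$, identify $X$ with $\spec A$ for $A=\varinjlim A_{n}$, note that $A$ is integral over $k$ because each finite $k$-algebra $A_{n}$ is, and conclude that $\dim A=0$ and that the residue fields are algebraic over $k$. The only cosmetic difference is that where you argue directly that a domain integral over a field is a field, the paper quotes \cite[Proposition 5.10(a), p.~69]{liu}.

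One parenthetical claim in your last paragraph is false as stated: the local ring $\mathcal O_{X,\e x}$ need not be a field, because $A$ may be non-reduced. For instance, take the constant system $A_{n}=k[t]/(t^{2})$ with identity transition maps; then $A=k[t]/(t^{2})$, the unique prime $\mathfrak p_{x}$ is minimal, yet $\mathfrak p_{x}A_{\mathfrak p_{x}}\neq 0$, so $\mathcal O_{X,\e x}$ is not a field. This does not damage your proof: since every prime of $A$ is maximal, $\kappa(x)=A_{\mathfrak p_{x}}/\mathfrak p_{x}A_{\mathfrak p_{x}}$ is the fraction field of the field $A/\mathfrak p_{x}$ and hence equals $A/\mathfrak p_{x}$, which is a quotient of $A$ and therefore integral, i.e.\ algebraic, over $k$ --- exactly the conclusion you want, obtained without any claim about $\mathcal O_{X,\e x}$ itself.
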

\begin{proof} For every $n\in\N$, set $X_{n}=\spec A_{\le n}$ and let $A=\varinjlim A_{\le n}$, so that $X=\spec A$. Since each $k$-algebra $A_{\le n}$ is integral over $k$, $A$ is integral over $k$ as well. Thus, by \cite[Proposition 5.10(a), p.~69]{liu}, $\dim X=\dim A=0$. Finally, if $\mathfrak m$ is a maximal ideal of $A$, then $A/\mathfrak m$ is integral and therefore algebraic over $k$.
\end{proof}

The next statement concerning abelian groups is well-known. We sketch its proof in order to adapt it to the setting of commutative $k$-group schemes in Proposition \ref{lim-surj} below.

\begin{lemma}\label{abgps} Let
\[
0\to (F_{n})\to (G_{n})\to (H_{n})\to 0
\]
be an exact sequence of projective systems of abelian groups with index set $\N$. If
the transition morphisms of the system $(F_{n})$ are surjective, then the sequence of abelian groups
\[
0\to \varprojlim F_{n}\to \varprojlim G_{n}\to \varprojlim H_{n}\to 0
\]
is exact.
\end{lemma}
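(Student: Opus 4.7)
The plan is to dispatch the left exactness part quickly and then concentrate all the work on the surjectivity of $\varprojlim G_{n}\to \varprojlim H_{n}$, where the hypothesis on $(F_{n})$ is used. Left exactness of the inverse limit is formal: since the maps $F_{n}\to G_{n}$ are injective and the kernel of $\varprojlim G_{n}\to \varprojlim H_{n}$ consists of those coherent sequences $(g_{n})$ with each $g_{n}$ in the kernel of $G_{n}\to H_{n}$, i.e. in the image of $F_{n}$, one reads off directly that $\varprojlim F_{n}$ is the kernel of $\varprojlim G_{n}\to\varprojlim H_{n}$ and is injected into $\varprojlim G_{n}$.

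The heart of the proof is the surjectivity statement. Given $(h_{n})\in\varprojlim H_{n}$, I would construct a compatible lift $(g_{n})\in\varprojlim G_{n}$ by induction on $n$. For the base case, choose any $g_{1}\in G_{1}$ mapping to $h_{1}$, which exists because $G_{1}\to H_{1}$ is surjective. For the induction step, suppose compatible lifts $g_{1},\ldots,g_{n}$ have been constructed; pick an arbitrary preimage $g_{n+1}^{\le\prime}\in G_{n+1}$ of $h_{n+1}$. Letting $\psi\colon G_{n+1}\to G_{n}$ denote the transition morphism, the element $\psi(g_{n+1}^{\le\prime})-g_{n}$ maps to $h_{n}-h_{n}=0$ in $H_{n}$, hence lies in (the image of) $F_{n}$. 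Call this element $f_{n}$.

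Here is where the Mittag-Leffler-type hypothesis enters: since the transition map $F_{n+1}\to F_{n}$ is surjective, I can lift $f_{n}$ to some $f_{n+1}\in F_{n+1}$ and then set
\[
g_{n+1}=g_{n+1}^{\le\prime}-f_{n+1}\in G_{n+1}.
\]
Since $f_{n+1}$ maps to $0$ in $H_{n+1}$, the new element $g_{n+1}$ still lifts $h_{n+1}$, and by construction $\psi(g_{n+1})=\psi(g_{n+1}^{\le\prime})-f_{n}=g_{n}$, so compatibility with the previously chosen lifts is achieved. Iterating produces the desired element of $\varprojlim G_{n}$.

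The only subtle point — and the one that will need to be handled with care when the argument is transported in Proposition \ref{lim-surj} to the category of commutative $k$-group schemes — is this inductive adjustment step, which rests on the ability to subtract a lift of $f_{n}$ inside $G_{n+1}$. In the abelian group setting this is trivial; in the scheme-theoretic setting the same strategy will work provided one can formulate ``subtraction by an element of $F_{n+1}$ inside $G_{n+1}$'' functorially, which is why the commutative group scheme hypothesis is natural.
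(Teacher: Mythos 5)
Your proof is correct and follows essentially the same route as the paper's: left exactness is dispatched formally, and surjectivity is proved by inductively lifting $h_{n+1}$ and then correcting the lift by an element of $F_{n+1}$ obtained from the surjectivity of the transition maps of $(F_{n})$, exactly the adjustment the paper performs (up to a sign convention). Your closing remark about making the subtraction step functorial is indeed the point handled in Proposition \ref{lim-surj}.
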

\begin{proof} (Sketch. For more details, see \cite[Lemma 7.2.8, p.~175]{ega1}).
For every $n\in\N$, let $f_{n}\colon G_{n}\to H_{n}$ be the given homomorphism. It is well-known that the nontrivial part of the present proof is to establish the surjectivity of $f=\varprojlim f_{n}\colon \varprojlim G_{n}\to \varprojlim H_{n}$. For every $n\geq 2$, let $u_{n}\colon F_{n}\to F_{n-1}$, $v_{n}\colon G_{n}\to G_{n-1}$ and $w_{n}\colon H_{n}\to H_{n-1}$ be the given transition morphisms. Let $\varphi=\varprojlim \varphi_{n}\in H$, where $(\varphi_{n})\in (H_{n})$ is a coherent sequence, i.e., $w_{n}(\varphi_{n})=\varphi_{n-1}$. Since $f_{1}$ is surjective, there exists a lifting $\psi_{\le 1}\in G_{1}$ of $\varphi_{\le 1}$, i.e., $f_{1}(\psi_{\le 1})=\varphi_{\le 1}$. Now, since $f_{2}$ is surjective, there exists a lifting $\psi_{2}^{\e\prime}\in G_{2}$ of $\varphi_{2}$, i.e., $f_{2}(\psi_{2}^{\e\prime})=\varphi_{2}$, but $\psi_{2}^{\e\prime}$ may not be coherent with $\psi_{\le 1}$, i.e., $v_{2}(\psi_{2}^{\e\prime})\neq \psi_{\le 1}$. To obtain a lifting $\psi_{2}$ of $\varphi_{2}$ which is, in fact, coherent with $\psi_{\le 1}$, we proceed as follows. Since $f_{1}(\psi_{\le 1})=\varphi_{1}=w_{2}(\varphi_{2})=w_{2}(f_{2}(\psi_{2}^{\e\prime}))=
f_{1}(v_{2}(\psi_{2}^{\e\prime}))$, the difference $\vartheta_{2}=\psi_{\le 1}-v_{2}(\psi_{2}^{\e\prime})\in\krn f_{1}=F_{1}$. Now, since $u_{2}\colon F_{2}\to F_{1}$ is surjective by hypothesis, there exists $\psi_{2}^{\e\prime\prime}\in F_{2}$ such that $u_{2}(\psi_{2}^{\e\prime\prime}\lle)=\vartheta_{2}$. Now set $\psi_{2}=\psi_{2}^{\e\prime}+\psi_{2}^{\e\prime\prime}\in G_{2}$. Then $\psi_{2}$ is a lifting of $\varphi_{2}$ which is coherent with $\psi_{\le 1}$, i.e., $f_{2}(\psi_{2})=\varphi_{2}$ and $v_{2}(\psi_{2})=\psi_{\le 1}$. We now repeat the above argument starting with $\varphi_{2}$ in place of $\varphi_{\le 1}$ to obtain a lifting $\psi_{3}$ of $\varphi_{3}$ which is coherent with $\psi_{2}$, and so forth. In this way we obtain a coherent sequence
$(\psi_{n})\in (G_{n})$ which is a lifting of $(\varphi_{n})\in (H_{n})$. Then $\psi=\varprojlim\psi_{n}\in G$ is a lifting of $\varphi\in H$, i.e., $f(\psi)=\varphi$, which completes the proof.
\end{proof}

The following statement extends the previous lemma to the category of commutative group schemes over a field $k$.
 
\begin{proposition}\label{lim-surj} Let
\[
0\to (F_{n})\to (G_{n})\to (H_{n})\to 0
\]
be a sequence of projective systems of commutative $k$-group schemes indexed by $\N$ with affine transition morphisms. Assume that the following conditions hold.
\begin{enumerate}
\item[(i)] For every $n\in\N$, the sequence of commutative $k$-group schemes
\[
0\longrightarrow F_{n}\overset{i_{n}}{\longrightarrow } G_{n}\overset{f_{\lbe n}}{\longrightarrow } H_{n}\longrightarrow  0
\]
is exact for the fpqc topology on $(\mr{Sch}/k)$.
\item[(ii)] For every $n\in\N$, $f_{n}\colon G_{n}\to H_{n}$ is flat and quasi-compact.
\item[(iii)] The transition morphisms of the system $(F_{n})$ are surjective.
\end{enumerate}
Then the sequence of commutative $k$-group schemes
\[
0\to \varprojlim F_{n}\to \varprojlim G_{n}\to \varprojlim H_{n}\to 0
\]
is exact for the fpqc topology on $(\mr{Sch}/k)$.
\end{proposition}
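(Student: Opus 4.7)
The plan is to mimic the proof of Lemma~\ref{abgps} in the scheme setting, building fpqc covers at each finite stage to perform the local lifting. Write $F=\varprojlim F_n$, $G=\varprojlim G_n$ and $H=\varprojlim H_n$; these exist as commutative $k$-group schemes by the discussion preceding \eqref{plim} and Remark~\ref{lim-rems}(a), with limit morphisms $\iota=\varprojlim i_n$ and $f=\varprojlim f_n$. To identify $F$ with $\krn f$, observe that by hypothesis~(i) and Lemma~\ref{surj}, $F_n=\krn f_n$ as a $k$-scheme for every $n$; combined with \eqref{plim} and the commutation of inverse limits with kernels of abelian groups, this yields, for every $k$-scheme $Z$,
\[
\Hom_{\le k}(Z,F)=\varprojlim F_n(Z)=\varprojlim\krn(G_n(Z)\to H_n(Z))=\krn(G(Z)\to H(Z)),
\]
so $F=\krn f$ as $k$-group schemes. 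It then remains to verify that $f$ is locally surjective for the fpqc topology on $(\mathrm{Sch}/k)$.

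Fix a $k$-scheme $Z$ and a morphism $\varphi\colon Z\to H$, corresponding to a coherent family $\varphi_n\colon Z\to H_n$. I would build inductively a tower $\cdots\to T_n\to T_{n-1}\to\cdots\to T_0=Z$ with affine fpqc transition morphisms, equipped with coherent lifts $\psi_n\colon T_n\to G_n$ satisfying $f_n\circ\psi_n=\varphi_n|_{T_n}$ and $v_n\circ\psi_n=\psi_{n-1}|_{T_n}$, where $v_n$ is the transition morphism of $(G_n)$. At stage $n$, I would first lift $\varphi_n|_{T_{n-1}}$ without compatibility by taking $T_{n-1}^{\prime}=G_n\times_{H_n}T_{n-1}$ with first projection $\psi_n^{\prime}\colon T_{n-1}^{\prime}\to G_n$; by hypothesis~(ii) and Lemma~\ref{surj} applied to~(i), $f_n$ is faithfully flat and quasi-compact, so $T_{n-1}^{\prime}\to T_{n-1}$ is an fpqc cover. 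The discrepancy $\vartheta=\psi_{n-1}|_{T_{n-1}^{\prime}}-v_n\psi_n^{\prime}\colon T_{n-1}^{\prime}\to G_{n-1}$ satisfies $f_{n-1}\circ\vartheta=0$ and hence factors through $F_{n-1}=\krn f_{n-1}$. Assuming $u_n\colon F_n\to F_{n-1}$ is fpqc (which is to be extracted from hypothesis~(iii) together with the affineness of the transition morphisms), I would then set $T_n=F_n\times_{F_{n-1}}T_{n-1}^{\prime}$, an fpqc cover of $T_{n-1}^{\prime}$, and let $\psi_n^{\prime\prime}\colon T_n\to F_n$ be the first projection, so that $u_n\psi_n^{\prime\prime}=\vartheta|_{T_n}$; then $\psi_n=\psi_n^{\prime}|_{T_n}+i_n\psi_n^{\prime\prime}$ is the required compatible lift.

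Finally, I would take $T=\varprojlim T_n$, which exists as a $k$-scheme by the affineness of the transition morphisms, and assemble the coherent family $(\psi_n)$ via \eqref{plim} into a morphism $\psi\colon T\to G$ with $f\circ\psi=\varphi|_T$. To see that $T\to Z$ is fpqc one localizes to reduce to $Z=\spec A$ affine, so that each $T_n=\spec B_n$ is affine, each $B_{n-1}\to B_n$ is faithfully flat, and the colimit $A=B_0\to\varinjlim B_n$ is faithfully flat by the standard stability of faithful flatness under filtered colimits; quasi-compactness is automatic between affine schemes. The principal obstacle is the inductive lifting along $u_n$: although the factorization of $\vartheta$ through $F_{n-1}$ is purely formal, the existence of an fpqc-local lift requires $u_n$ to be an fpqc morphism, which is precisely what hypothesis~(iii) (combined with the other standing hypotheses as needed to secure flatness of $u_n$) must provide.
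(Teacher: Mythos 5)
Your identification of $F$ with $\krn f$ is fine and matches the paper, but your strategy for the local surjectivity of $f$ has a genuine gap at exactly the point you flag. You need the cover $T_n=F_n\times_{F_{n-1}}T'_{n-1}\to T'_{n-1}$ to be an fpqc cover, i.e.\ you need $u_n\colon F_n\to F_{n-1}$ to be faithfully flat (or at least fpqc-locally surjective on points). Hypothesis (iii) gives only surjectivity of $u_n$ as a morphism of schemes, and the standing affineness of the transition morphisms adds quasi-compactness but not flatness; a surjective affine morphism need not be flat, and, more to the point, a surjective morphism of $k$-group schemes need not be locally surjective for the fpqc topology at all --- the paper's own Remark in Section~2 gives the counterexample $0\to\alpha_p$, which is surjective on underlying spaces but admits no fpqc-local lifting of the identity point of $\alpha_p$. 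So for an arbitrary test scheme $Z$ the discrepancy $\vartheta\colon T'_{n-1}\to F_{n-1}$ cannot in general be lifted through $u_n$ after any fpqc cover, and your induction breaks down; no combination of the stated hypotheses ``secures flatness of $u_n$''.

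The paper circumvents this by never trying to lift over an arbitrary $Z$. It first shows (via Proposition~\ref{lim-prop}, using (ii)) that $f=\varprojlim f_n$ is itself flat and quasi-compact, so by Lemma~\ref{tau-ex} local surjectivity of $h_S(f)$ follows once $f$ is surjective \emph{as a morphism of schemes}. Surjectivity of a scheme morphism is then checked on field-valued points (EGA~I, Proposition~3.6.2): for $\varphi\colon\spec K\to H$ one runs the Mittag-Leffler correction argument of Lemma~\ref{abgps}, and at each stage the needed lift of the discrepancy $\vartheta_n\colon\spec K_n'\to F_{n-1}$ through $u_n$ exists after a further \emph{field extension} precisely because $u_n$ is surjective --- no flatness of $u_n$ is required when the source of the lifting problem is the spectrum of a field. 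This reduction to field-valued points is the key idea missing from your argument; with it, your tower of covers is replaced by a tower of field extensions $K\subseteq K_1\subseteq K_2\subseteq\cdots$, and the limit point $\spec\bigl(\bigcup_n K_n\bigr)\to G$ gives the required lift.
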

\begin{proof}
By (i) and Lemma \ref{surj}, $f_{n}$ is surjective for every $n\in\N$. Now, for each integer $n\geq 2$, let $u_{n}\colon F_{n}\to F_{n-1}$, $v_{n}\colon G_{n}\to G_{n-1}$ and $w_{n}\colon H_{n}\to H_{n-1}$ be the given transition morphisms. Then
$v_{n}\circ i_{n}=i_{n-1}\circ u_{n}$ for every $n\geq 2$.
Now set $F=\varprojlim F_{n},G=\varprojlim G_{n},H=\varprojlim H_{n}$ and $f=\varprojlim f_{n}\colon G\to H$. By (i), \eqref{plim} and the left-exactness of the inverse limit functor on the category of abelian groups, the sequence
\[
0\to F\to G\overset{f}\to H
\]
is exact for the fpqc topology on $(\mr{Sch}/k)$. Further, by (ii) and Proposition \ref{lim-prop}, $f$ is flat and quasi-compact.  Thus, by Lemma \ref{tau-ex}, we are reduced to checking that $f$ is a surjective morphism of schemes. 
Recall that, by \cite[Proposition 3.6.2, p.~244]{ega1}, $f$ is surjective if, and only if, for every field $K$ and every morphism $\varphi\colon\spec K\to H$, there exist an extension $K^{\lle\prime}$ of $K$ and a morphism $\psi\colon\spec K^{\le\prime}\to G$ such that the following diagram commutes
\begin{equation}\label{f-diag}
\xymatrix{\spec K'\ar@{-->}[d]_{\psi}\ar@{-->}[r]& \spec K\ar[d]^{\varphi}\\
G\ar[r]^{f}& H.}
\end{equation}
We will construct $\psi$ and $K'\e$ by adapting to the present context the proof of Lemma \ref{abgps}.

For every $n\in\N$, let $\varphi_{n}\colon \spec K\to H_{n}$ be the composition of $\varphi\colon \spec K\to H$ and the canonical morphism $H\to H_{n}$. Then $(\varphi_{n})$ is a coherent sequence, i.e., $w_{n}\circ \varphi_{n}=\varphi_{\le n-1}$ for every $n\geq 2$, and $\varphi=\varprojlim \varphi_{n}$ by \eqref{plim}. Now, since $f_{1}\colon G_{1}\to H_{1}$ is surjective, there exist an extension $K_{1}$ of $K$ and a morphism $\chi_{1}\colon \spec K_{1}\to G_{1}$ such that the following diagram commutes:
\begin{equation}\label{step0}
\xymatrix{\spec K_{1}\ar@{-->}[d]_{\chi_{1}}\ar@{-->}[r]^{g_{\le 1}}& \spec K\ar[d]^{\varphi_{\le 1}}\\
G_{1}\ar[r]^{f_{1}}& H_{1}.}
\end{equation}
Next, since $f_{2}$ is surjective, there exist an extension $K_{2}^{\e\prime}$ of $K_{1}$ and a morphism $\chi_{2}^{\e\prime}\colon \spec K_{2}^{\e\prime}\to G_{2}$ such that the following diagram commutes:
\[
\xymatrix{\spec K_{2}^{\e\prime}\ar@{-->}[d]_{\chi_{2}^{\e\prime}}\ar@{-->}[r]^{g_{2}^{\prime}}& \spec K_{1}\ar[d]^{\varphi_{2}\le \circ\e g_{\le 1}}\\
G_{2}\ar[r]^{f_{2}}& H_{2}.}
\]
We have
\[
f_{1}\circ\chi_{1}\circ g_{2}^{\le \prime}=\varphi_{\le 1}\circ g_{\le 1}\circ g_{2}^{\le \prime}=w_{2}\circ \varphi_{2}\circ g_{\le 1}\circ g_{2}^{\le \prime}=w_{2}\circ f_{2}
\circ \chi_{2}^{\e\prime}=f_{1}\circ v_{2}\circ \chi_{2}^{\e\prime}.
\]
Thus $\chi_{1}\circ g_{2}^{\le\prime}-v_{2}\circ \chi_{2}^{\e\prime}\colon \spec K_{2}^{\e\prime}\to G_{1}$ factors through a morphism $\vartheta_{2}\colon \spec K_{2}^{\e\prime}\to F_{1}$, i.e., 
\begin{equation}\label{eqtn}
\chi_{1}\circ g_{2}^{\le \prime}=v_{2}\circ \chi_{2}^{\e\prime}+i_{1}\circ\vartheta_{2}.
\end{equation}
Now, since $u_{2}\colon F_{2}\to F_{1}$ is surjective by (iii), there exist an extension $K_{2}$ of $K_{2}^{\le\prime}$ and a morphism $\chi_{2}^{\e\prime\prime}\colon 
\spec K_{2}\to F_{2}$ such that the following diagram commutes
\[
\xymatrix{\spec K_{2}\ar@{-->}[d]_{\chi_{2}^{\e\prime\prime}}\ar@{-->}[r]^(.50){g_{2}^{\e\prime\prime}}&
\spec K_{2}^{\e\prime}\ar[d]^{\vartheta_{2}}\\
F_{2}\ar[r]^{u_{2}}& F_{1}.}
\]
Set $\chi_{2}= \chi_{2}^{\e\prime}\circ g_{2}^{\e\prime\prime}+i_{2}\circ \chi_{2}^{\e\prime\prime}\colon\spec K_{2}\to G_{2}$ and $g_{\le 2}=g_{\le 1}\circ g_{2}^{\e\prime}\circ g_{2}^{\e\prime\prime}\colon \spec K_{2}\to \spec K$. Since $f_{2}\circ i_{2}\colon F_{2}\to H_{2}$ is the zero morphism (i.e., $f_{2}\circ i_{2}$ factors through the zero section $\spec k\to H_{2}$), we have $f_{2}\circ\chi_{2}=f_{2}\circ\chi_{2}^{\e\prime}\circ g_{2}^{\e\prime\prime}=
\varphi_{2}\circ g_{\le 1}\circ g_{2}^{\e\prime}\circ g_{2}^{\e\prime\prime}=\varphi_{2}\circ g_{\le 2}$, i.e., the following diagram commutes:
\begin{equation}\label{step1}
\xymatrix{\spec K_{2}\ar@{-->}[d]_{\chi_{2}}\ar@{-->}[r]^(.50){g_{\le 2}}&
\spec K\ar[d]^{\varphi_{2}}\\
G_{2}\ar[r]^{f_{2}}& H_{2}.}
\end{equation}
Further, by \eqref{eqtn}, we have
\begin{align*}
\chi_{1}\circ g_{2}^{\e\prime}\circ g_{2}^{\e\prime\prime}&=v_{2}\circ \chi_{2}^{\e\prime}\circ g_{2}^{\e\prime\prime}+i_{1}\circ\vartheta_{2}\circ g_{2}^{\e\prime\prime}=v_{2}\circ \chi_{2}^{\e\prime}\circ g_{2}^{\e\prime\prime}+i_{1}\circ u_{2}\circ \chi_{2}^{\e\prime\prime}\\
&=v_{2}\circ \chi_{2}^{\e\prime}\circ g_{2}^{\e\prime\prime}+v_{2}\circ i_{2}\circ\chi_{2}^{\e\prime\prime}=v_{2}\circ\chi_{2},
\end{align*}
i.e., the following diagram commutes
\begin{equation}\label{coh1}
\xymatrix{\spec K_{2}\ar[d]_{\chi_{2}}\ar[rr]^{g_{2}^{\e\prime}\circ g_{2}^{\e\prime\prime}}&&\spec K_{1}\ar[d]^{\chi_{1}}\\
G_{2}\ar[rr]^{u_{2}}&& G_{1}.}
\end{equation}
We now repeat the above argument starting with diagram \eqref{step1} in place of diagram \eqref{step0} and obtain diagrams similar to \eqref{step1} and \eqref{coh1} associated to $\varphi_{3}$, and so forth. In this way we obtain field extensions $K\subseteq K_{1}\subseteq K_{2}\subseteq\dots$ and morphisms $\chi_{n}\colon \spec K_{n}\to G_{n}$ for $n=1,2,\dots$, such that, for every $n\in\N$, the following diagrams commute:
\[
\xymatrix{\spec K_{n}\ar[d]_{\chi_{n}}\ar[r]& \spec K\ar[d]^{\varphi_{n}}\\
G_{n}\ar[r]^{f_{n}}& H_{n}}
\]
and
\[
\xymatrix{\spec K_{n+1}\ar[d]_{\chi_{n+1}}\ar[r]& \spec K_{n}\ar[d]^{\chi_{n}}\\
G_{n+1}\ar[r]^{u_{n+1}}& G_{n}.}
\]
Let $K^{\le\prime}=\bigcup_{n\le\geq\le 1} K_{n}$ and, for every $n\in\N$, let
$\psi_{n}\colon\spec K^{\le\prime}\to G_{n}$ be the composition of $\chi_{n}\colon\spec K_{n}\to G_{n}$ and the canonical morphism $\spec K^{\le\prime}\to\spec K_{n}$. Then the sequence $(\psi_{n})$ is a coherent lifting of $(\varphi_{n})$, i.e., for every $n\in\N$ we have $u_{n+1}\circ\psi_{n+1}=\psi_{n}$ and the following diagram commutes:
\[
\xymatrix{\spec K^{\le\prime}\ar[d]_{\psi_{n}}\ar[r]& \spec K\ar[d]^{\varphi_{n}}\\
G_{n}\ar[r]^{f_{n}}& H_{n}.}
\]
The field $K^{\le\prime}$ and the morphism $\psi=\varprojlim \psi_{n}\colon \spec K^{\e\prime}\to G$ are the ones required to make \eqref{f-diag} commute. This completes the  proof.
\end{proof}

\begin{remarks}\label{gens}\indent
\begin{enumerate}
\item[(a)] As is well-known, Lemma \ref{abgps} holds, more generally, if $(F_{n})$ satisfies the Mittag-Leffler condition \cite[Proposition 9.1(b), p.~192]{hart} (recall that the Mittag-Leffler condition holds if the transition maps of the system $(F_{n})$ are surjective). Thus Proposition \ref{lim-surj} may be regarded as a partial extension of \cite[Proposition 9.1(b), p.~192]{hart} to the category of commutative $k$-group schemes. Note also that, by Remark \ref{lim-rems}(b), the fppf topology cannot be used in place of the fpqc topology in Proposition \ref{lim-surj}. For the same reason, we cannot prove Proposition \ref{lim-surj} by working only with $\kbar$-rational points since it seems unlikely that Proposition \ref{fppf}(ii) remains valid when the morphism $q$ in that statement is quasi-compact rather than locally of finite type.
\item[(b)] The statemens \ref{lim-prop}\,\e--\e\,\ref{lim-prop2} are valid in the more general setting of \cite[$\text{IV}_{3}$, \S8.2]{ega}, i.e., when $\spec k$ is replaced by an arbitrary scheme and $\N$ is replaced by an arbitrary directed set.
\end{enumerate}
\end{remarks}

\section{The perfect  closure of a ring}\label{pclos}
 
Let $p$ be a prime number and let $\mathbb F_{\be p}$ be the field with $p$ elements. If $A$ is a ring of characteristic $p$, i.e, an $\mathbb F_{\be p}$-algebra, let $F_{\!\lbe A}$ denote the Frobenius endomorphism of $A$, i.e., $F_{\!\lbe A}(a)=a^{\le p}$ for every $a\in A$. The $\mathbb F_{\be p}$-algebra $A$ is said to be {\it perfect} if $F_{\!\be A}$ is an isomorphism.  For every $\mathbb F_{\be p}$-algebra $A$, set $F^{\e n}_{\!\be A}=F_{\!\be A}\circ F_{\!\be A}\circ\dots\circ F_{\!\be A}$ ($n$ times) if $n>0$ and let $F^{\le n}_{\!\be A}$  be the identity morphism of $A$ if $n=0$. If $A$ is perfect and $n<0$, set $F^{\e n}_{\!\be A}=\big(F^{\e -1}_{\!\be A}\big)^{\be-n}$. For every $n$ such that $F_{\!\be A}^{\le n}$ is defined and every $x\in A$, we will write $x^{\lle p^{n}}=F_{\!\be A}^{\le\lle n}(x)$.

The {\it perfect closure} of the $\mathbb F_{\be p}$-algebra $A$ is a pair $(A^{\pf},\phi_{\lbe A})$ consisting of a perfect $\mathbb F_{\be p}$-algebra $A^{\pf}$ and a homomorphism of $\mathbb F_{\be p}$-algebras $\phi_{\be A}\colon A\to A^{\pf}$ which has the following universal property: for every perfect $\mathbb F_{\be p}$-algebra $B$ and every homomorphism of $\mathbb F_{\be p}$-algebras $\psi\colon A\to B$, there exists a unique homomorphism of $\mathbb F_{\be p}$-algebras $\psi^{\e\pf}\colon A^{\pf}\to B$ such that $\psi^{\e\pf}\circ\phi_{\be A}=\psi$, i.e., the following diagram commutes
\begin{equation}\label{tri}
\xymatrix{A\ar[dr]_(.4){\phi_{\be A}}\ar[rr]^{\psi}&& B\,,\\
& A^{\lle\pf}\ar@{..>}[ur]_{\psi^{\le\pf}}
}
\end{equation}
In other words, the assignment $A\mapsto A^{\pf}$ defines a covariant functor from the category of $\mathbb F_{\be p}$-algebras to the category of perfect $\mathbb F_{\be p}$-algebras which is left adjoint to the inclusion functor, i.e., for every perfect $\mathbb F_{\be p}$-algebra $B$, there exists a canonical bijection
\begin{equation}\label{lab}
\Hom_{\e \mathbb F_{\be\lle p}\le\text{-alg}}(A,B)\overset{\!\sim}{\to}
\Hom_{\e\text{Perf-}\mathbb F_{\be p}\text{-alg}}(A^{\pf},B),\psi\mapsto\psi^{\e\pf},
\end{equation}
whose inverse is given by 
\begin{equation}\label{pf-alg0}
\Hom_{\e\text{Perf-}\mathbb F_{\be p}\text{-alg}}(A^{\pf},B)\overset{\!\sim}{\to}
\Hom_{\e \mathbb F_{\be\lle p}\le\text{-alg}}(A,B), \omega\mapsto\omega\circ\phi_{\be A}.
\end{equation}
We will show below that $(A^{\pf},\phi_{\lbe A})$ exists. The above universal property will then show that $(A^{\pf},\phi_{\lbe A})$ is unique up to a unique isomorphism.

Let $C$ be any $\mathbb F_{\be p}$-algebra (e.g., $C=\mathbb F_{\be p}$) and let $A$ be a $C$-algebra with structural morphism $\alpha\colon C\to A$. Set $\alpha^{\pf}=(\phi_{\be A}\circ \alpha)^{\pf}$. Thus the following diagram commutes:
\[\xymatrix{C\ar[r]^{\alpha}\ar[d]^{\phi_{C}}& A\ar[d]^{\phi_{\be A}}\\
C^{\le\pf}\ar[r]^{\alpha^{\pf}}& A^{\pf}. }
\]
Then $A^{\pf}$ is a $C$-algebra via the composite homomorphism 
$\alpha^{\pf}\circ \phi_{C}=\phi_{\be A}\circ\alpha$. Further, $\phi_{\be A}\colon A\to A^{\pf}$ is a homomorphism of $C$-algebras such that, if $\psi\colon A\to B$ is a homomorphism of $C$-algebras, where $B$ is perfect, then $\psi^{\le\pf}\colon A^{\pf}\to B$ is the unique homomorphism of $C^{\e\pf}$-algebras such that $\psi^{\e\pf}\circ\phi_{\be A}=\psi$. Thus there exists a canonical bijection
\begin{equation}\label{lab2}
\Hom_{\e C\text{-\e alg}}(A,B)\overset{\!\sim}{\to}
\Hom_{\e \text{Perf-}C^{\lle \pf}\text{-\e alg}}(A^{\pf},B),\psi\mapsto\psi^{\e\pf},
\end{equation}
(which generalizes \eqref{lab}) whose inverse is given by
\begin{equation}\label{lab3}
\Hom_{\e \text{Perf-}C^{\lle \pf}\text{-\e alg}}(A^{\pf},B)\overset{\!\sim}{\to}
\Hom_{\e C\text{-\e alg}}(A,B), \, \omega\mapsto \omega\circ \phi_{\be A}.
\end{equation}
(which generalizes \eqref{pf-alg0}). In particular, if $C$ is perfect, then there exists a canonical bijection
\begin{equation}\label{cab}
\Hom_{\e \text{Perf-}C\text{-\e alg}}(A^{\pf},B)\overset{\!\sim}{\to}
\Hom_{\e C\text{-\e alg}}(A,B), \, \omega\mapsto \omega\circ \phi_{\be A},
\end{equation}
which agrees with \eqref{pf-alg0} when $C=\mathbb F_{\be p}$.
The pair $(A^{\lle\pf},\phi_{\lbe A})$ can be constructed as follows (see \cite[p.~314]{gre} or \cite[V, \S1, no.~4, pp.~A.V.5-6]{bou2}): set
\begin{equation}\label{pres}
A^{\pf}=\varinjlim_{n\,\geq\e 0} A_{n},
\end{equation}
where $A_{n}=A$ for every  $n\e\geq\e 0$ and each transition map $A_{n}\to A_{n+1}$ is the Frobenius endomorphism $F_{\!\lbe  A}$, and let $\phi_{\be A}$ be the canonical homomorphism $A=A_{0}\to A^{\pf}$. Then \eqref{pres} is a perfect $C^{\e\pf}$-algebra with the required universal property \eqref{tri}. Further, the Frobenius automorphism $F_{\!\be   A^{\pf}}\colon A^{\pf}\to A^{\pf}$ can be described as follows: if $\alpha\in A^{\pf}=\varinjlim A_{n}$ is represented by $a\in A_{\le n}$, where $n\geq 1$, then $F_{\!\be  A^{\pf}}(\alpha)$ is represented by $a$ regarded as an element of $A_{\le n-1}$. On the other hand, $F_{\!\be A^{\pf}}^{-1}(\alpha)$ is represented by $a$ regarded as an element of $A_{\le n+1}$. See \cite[p.~314, last paragraph]{gre}.

\smallskip

\begin{remark}
The perfect closure $A^{\pf}$ of an $\mathbb F_{\be p}$-algebra $A$ should not be confused with the {\it perfection} $A^{\rm perf}$ of $A$, which is defined as the {\it projective} (rather than inductive) limit $\varprojlim A$, where the transition maps are all equal to the Frobenius endomorphism $F_{\be A}$.  The latter object is relevant, for example, in the constructions of Fontaine's rings of periods \cite[1.2.2]{fon} and of the tilting functor in the theory of perfectoid algebras \cite[Theorem 5.17]{sch}.
The assignment $A\mapsto A^{\rm perf}$ defines a covariant functor from the category of $\mathbb F_{\be p}$-algebras to the category of perfect $\mathbb F_{\be p}$-algebras which is {\it right} (rather than left) adjoint to the inclusion functor. To distinguish $A^{\pf}$ from $A^{\rm perf}$, some authors (e.g., Kedlaya and Liu \cite[3.1.2 and 3.4.1]{kl}) call $A^{\pf}$ (respectively, $A^{\rm perf}$) the {\it direct} (respectively, {\it inverse}) {\it perfection} of $A$.
\end{remark}

Now let $k$ be a perfect field of characteristic $p$, so that $k$ is naturally an $\mathbb F_{\be p}$-algebra, and let $A$ be a $k$-algebra. Then $A^{\pf}=\varinjlim A_{\le n}$ is an $A$-algebra via $\phi_{\be A}$ and the induced $k$-algebra structure on $A^{\pf}$ can be described as follows: if $\lambda\in k$ and $\alpha\in A^{\pf}$ is represented by $a\in A_{\le n}$, then $\lambda\e\alpha$ is represented by $\lambda^{p^{n}}\be a\in A_{\le n}$.  By \eqref{cab}, the map
\begin{equation}\label{pf-alg}
\Hom_{\e\text{$k$-alg}}(A^{\pf},B)\overset{\!\sim}{\to}\Hom_{\e \text{$k$-alg}}(A,B), \, \omega\mapsto \omega\circ \phi_{\be A},
\end{equation}
is bijective.

The following lemma shows that the operations of perfect closure and tensor product are compatible up to a canonical isomorphism.

\begin{lemma}\label{pf-prod}  Let $A$ be a $k$-algebra and let 
$B$ and $C$ be $A$-algebras. Then the canonical map
\[
\left(\phi_{\lbe B}\be \otimes_{\lbe A}\be \phi_{\le C}\right)^{\pf}\colon (B\be\otimes_{\lbe A}\! C\e)^{\le\pf}\to B^{\e\pf}\!\otimes_{\lbe A^{\pf}}\! C^{\e\pf}
\]
is an isomorphism of perfect $k$-algebras.
\end{lemma}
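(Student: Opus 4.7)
The plan is to verify that $B^{\pf}\otimes_{A^{\pf}}C^{\pf}$, equipped with the natural $k$-algebra map $\phi_{B}\otimes_{A}\phi_{C}$ from $B\otimes_{A}C$ (composed with the evident surjection $B^{\pf}\otimes_{A}C^{\pf}\to B^{\pf}\otimes_{A^{\pf}}C^{\pf}$), satisfies the universal property defining the perfect closure of $B\otimes_{A}C$. Uniqueness of solutions to universal problems will then force the canonical map $(\phi_{B}\otimes_{A}\phi_{C})^{\pf}$ to be an isomorphism of perfect $k$-algebras.

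The first substep is to check that $B^{\pf}\otimes_{A^{\pf}}C^{\pf}$ is itself perfect. I would define a candidate inverse of the Frobenius by setting $\theta(b\otimes c)=F_{B^{\pf}}^{-1}(b)\otimes F_{C^{\pf}}^{-1}(c)$ on elementary tensors. The key verification is that this rule is $A^{\pf}$-balanced: for $a\in A^{\pf}$, both $(ab)\otimes c$ and $b\otimes(ac)$ land on $F_{A^{\pf}}^{-1}(a)\,F_{B^{\pf}}^{-1}(b)\otimes F_{C^{\pf}}^{-1}(c)$, since $F_{B^{\pf}}^{-1}$ and $F_{C^{\pf}}^{-1}$ are ring homomorphisms compatible with $F_{A^{\pf}}^{-1}$ via the $A^{\pf}$-algebra structures. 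A direct computation on elementary tensors then yields $\theta\circ F=F\circ\theta=\id$, so Frobenius is bijective on $B^{\pf}\otimes_{A^{\pf}}C^{\pf}$.

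The second substep is to verify the universal property. Given a perfect $k$-algebra $D$ and a $k$-algebra homomorphism $h\colon B\otimes_{A}C\to D$, the restrictions $h_{B}\colon b\mapsto h(b\otimes 1)$ and $h_{C}\colon c\mapsto h(1\otimes c)$ are $A$-algebra homomorphisms into $D$ with respect to the $A$-algebra structure on $D$ induced by $h$. By \eqref{lab2}, each lifts uniquely to a homomorphism of perfect $A^{\pf}$-algebras $h_{B}^{\pf}\colon B^{\pf}\to D$ and $h_{C}^{\pf}\colon C^{\pf}\to D$. Being both $A^{\pf}$-algebra maps, they combine to give a unique $k$-algebra homomorphism $B^{\pf}\otimes_{A^{\pf}}C^{\pf}\to D$ that recovers $h$ upon precomposition with $\phi_{B}\otimes_{A}\phi_{C}$, and its uniqueness follows from that of $h_{B}^{\pf}$ and $h_{C}^{\pf}$.

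The main obstacle is the first substep, where the $A^{\pf}$-balancing condition for $\theta$ must be verified carefully. An alternative route that avoids this explicit construction exploits the presentation \eqref{pres}: since filtered colimits commute with tensor products over the colimit of the bases, one obtains $B^{\pf}\otimes_{A^{\pf}}C^{\pf}\cong\varinjlim_{n}(B_{n}\otimes_{A_{n}}C_{n})\cong\varinjlim_{n}(B\otimes_{A}C)_{n}=(B\otimes_{A}C)^{\pf}$, where the transition map is the tensor of Frobenius on each factor and coincides with Frobenius on $B\otimes_{A}C$; a final identification check shows that this isomorphism agrees with $(\phi_{B}\otimes_{A}\phi_{C})^{\pf}$.
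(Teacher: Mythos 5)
Your proof is correct and takes essentially the same route as the paper: both arguments reduce the claim to the universal property of the tensor product combined with the adjunction \eqref{pf-alg}/\eqref{lab2}, i.e., to identifying Hom-sets into an arbitrary perfect $k$-algebra $D$. The only difference is presentational: you verify explicitly that $B^{\e\pf}\otimes_{A^{\pf}}C^{\e\pf}$ is perfect (a point the paper leaves implicit, though it is needed even to make sense of the map $(\phi_{\lbe B}\otimes_{\lbe A}\phi_{\le C})^{\pf}$), which is a sound addition rather than a change of method.
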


\begin{proof} Let $D$ be an arbitrary perfect $k$-algebra. By the universal property of the tensor product \cite[Proposition 1.14, p.~5]{liu}, there exists a canonical bijection between the set of $k$-algebra homomorphisms $B\otimes_{\lbe A}C\to D$ and the set of pairs of $k$-algebra homomorphisms $B\to D$ and $C\to D$ which induce the same $k$-algebra homomorphism $A\to D$.  Thus, using \eqref{pf-alg}, we obtain canonical bijections 
\[
\begin{array}{rcl}
\Hom_{\e \text{$k$-alg}}\le((B\be\otimes_{A}\be C\e)^{\le\pf},D) &\simeq& \Hom_{\e \text{$k$-alg}}\le(B\be\otimes_{A}\be C,D)\\
&\simeq&\Hom_{\e \text{$k$-alg}}(B,D)\times_{\Hom_{\e \text{$k$-alg}}(A,\le D)}\Hom_{\e\text{$k$-alg}}(C, D)\\
&\simeq& \Hom_{\e \text{$k$-alg}}(B^{\le\pf}, D)\times_{\Hom_{\e \text{$k$-alg}}(A^{\pf}\be,\le D)}\Hom_{\e\text{$k$-alg}}(C^{\e\pf}, D)\\
&\simeq&\Hom_{\e\text{$k$-alg}}(B^{\le\pf}\be\otimes_{A^{\pf}}\be C^{\e\pf}, D),
\end{array}
\]
whence the lemma follows. 
\end{proof}

Note that, if $A$ is a $k$-algebra, the Frobenius endomorphism of $A$ is a homomorphism of $\mathbb F_{\be p}$-algebras which is not, in general, a homomorphism of $k$-algebras. Consequently, \eqref{pres} only defines $A^{\pf}$ as an inductive limit in the category of $\mathbb F_{\be p}$-algebras. In order to extend the operation of perfect closure to the category of $k$-schemes in the next Section, we need to represent $A^{\pf}$ as an inductive limit in the category of $k$-algebras. To this end, we introduce the following notions.

\smallskip

For every $\mathbb F_{\be p}$-algebra $C$ and every integer $n$ such that $F^{\le n}_{\!\lbe C}$ is defined, we will write $(C,F^{\e n}_{\!\lbe C})$ for the ring $C$ regarded as a $C$-algebra via $F^{\e n}_{\!\lbe C}$. Note that, if $M$ is a $C$-module, then the abelian group
$M\otimes_{\e C}\be(C,F^{\le n}_{\!\lbe C})$ can be endowed with two distinct $C$-module structures, namely the {\it standard $C$-module structure} defined by $c(m\otimes d)=(cm)\otimes d=m\otimes c^{\e\le p^{\lle n}}\!\lbe d$, and the {\it non-standard $C$-module structure} defined by $c(m\otimes d)=m\otimes c\le d$, where $m\in M$ and $c,d\in C$.
If $M$ is a $C$-algebra, then the preceding structures are, in fact, $C$-algebra structures.
We will write $M^{\le(F^{\lle n}_{\!\lbe C})}$ for the abelian group $M\otimes_{\e C}\be(C,F^{\le\lle n}_{\!\lbe C})$ endowed with its {\it non-standard} $C$-module structure. Thus
\begin{equation*}\label{mpn}
M^{\le(F^{\lle n}_{\!\lbe C})}= M\otimes_{\e C}\be(C,F^{\le n}_{\!\lbe C})\qquad\text{(as abelian groups)}
\end{equation*}
and the $C$-module structure on $M^{\le(F^{\lle n}_{\!\lbe C})}$ is given by $c(m\otimes d)=m\otimes c\le d\in M^{\le(F^{\lle n}_{\!\lbe C})}$ for every $c\in C$ and $m\otimes d\in M^{\le(F^{\lle n}_{\!\lbe C})}$ (where $m\in M$ and $d\in C\le$). For every pair of integers $m,n$ such that $F^{\le n}_{\!\lbe C}$ and $F^{\le m}_{\!\lbe C}$ are defined, there exists a canonical isomorphism of $C$-modules
\begin{equation}\label{idnt}
\big(M^{\le(F^{\le n}_{\!\lbe C})}\big)^{\le(F^{\le m}_{\!\lbe C})}\!\overset{\!\sim}{\to} M^{\le(F^{\le\lle n\le+m}_{\!\lbe C})},
\end{equation}
which is defined on generators by $(m\otimes c)\otimes d\mapsto m\otimes\le c^{\e p^{ m}}\! d$, where  $m\in M$ and $c,d\in C$ (its inverse is defined on generators by $m\otimes c\mapsto (m\otimes 1)\otimes c$). If $n=0$, then \eqref{idnt} is the identity map on $M^{\le(F^{\le m}_{\!\lbe C})}$.

\smallskip
 
Now let $n$ be an integer such that $F^{\lle n}_{\be\be C}$ is defined and let ${}^{F^{\lle n}_{\be\lbe C}}\!\lbe M$ denote the abelian group $M$ equipped with the new $C$-module structure defined by $c\be\cdot\be m=c^{\e\le p^{n}}\! m$, where $c\in C$ and $m\in M$. For every pair of $C$-modules $M,N$, there exist canonical bijections
\begin{equation}\label{idnt2}
\Hom_{\e \text{$C$-mod}}\lbe\big(N^{\le(F^{\lle n}_{\!\lbe C})}, M\big)\overset{\!\sim}{\to}\Hom_{\e \text{$C$-mod}}\lbe\big(N, {}^{F^{\lle n}_{\be\lbe C}}\!\lbe M\e\big), \quad f\mapsto  f\circ \lambda_{N,\le n} ,
\end{equation}
where $\lambda_{N,\le n}\colon N\to N^{\le(F^{n}_{\!\lbe C})}$ maps $x$ to $x\otimes 1$. Note that, if $M$ is a $C$-algebra, then both $M^{\le(F^{\lle n}_{\!\lbe C})}$ and ${}^{F^{ n}_{\be\be C}}\!\lbe M$ are $C$-algebras.

If $C$ is {\it perfect}, then $F^{\lle n}_{\be\lbe C}$ is defined for every $n\in\Z$ and the canonical map
\begin{equation}\label{pn}
{}^{F^{\lle n}_{\be\lbe C}}\!\be M\to M^{\le(F^{-n}_{\!\lbe C})}, \, m\mapsto m\otimes 1,
\end{equation}
is an isomorphism of $C$-modules with inverse given by $m\otimes c\mapsto c^{\e\lle p^{\le n}}\!m$, where $m\in M$ and $c\in C$.  In this case  \eqref{idnt2} and \eqref{pn} induce a bijection
\begin{equation*}\label{idnt1}
\Hom_{\e \text{$C$-mod}}\lbe\big(N^{\le(F^{n}_{\!\lbe C})},M\big)\overset{\!\sim}{\to}\Hom_{\e \text{$C$-mod}}\lbe\big(N,M^{\le(F^{\lle -n}_{\!\lbe C})}\e\big).
\end{equation*}

Now, if $M$ is a $k$-module and $n\in\Z$ is arbitrary, the $k$-modules $M^{\le(\le F^{\lle n}_{\!\lbe k})}$ and ${}^{F^{\lle n}_{\!\lbe k}}\be\be M$ will be denoted by $M^{\le(\e p^{\lle n})}$ and ${}^{p^{\lle n}}\be\be M$, respectively. If $C$ is a $k$-algebra, $M$ is a $C$-module  and $n$ is an integer such that $F^{\le n}_{\be\lbe C}$ is defined, then the $C$-module ${}^{F^{ \lle n}_{\be\lbe C}}\!\lbe M$ endowed with the $k$-module structure induced by the map $k\to C$ is canonically isomorphic to the $k$-module ${}^{p^{\lle n}}\be\be\lbe M$. Henceforth these $k$-modules will be identified. Further, we will write ${}^{p^n}\!\lbe M$ for ${}^{F^{ n}_{\be\lbe C}}\!\lbe M$. No confusion should result since it will always be clear from the context whether ${}^{p^n}\be\be\lbe M$ is being regarded as a $C$-module or as a $k$-module.

\begin{caveat} \label{log} If $C$ is a $k$-algebra and $F^{\le n}_{\be\lbe C}$ is defined, then the above identification of $k$-modules ${}^{F^{ n}_{\be\lbe C}}\!\lbe M={}^{p^n}\be\be M$ does not extend to  
$M^{\le(F^{\lle n}_{\!\lbe C})}$ and $M^{\lle(\e p^{\lle n})}$, as we now explain. Clearly the $C$-module $M^{\le(F^{\lle n}_{\!\lbe C})}$ can be regarded as a $k$-module via the structural map $\iota\colon k\to C$, which induces a homomorphism of abelian groups $\iota^{\prime}\colon M^{\le(\e p^{\lle n})}\to M^{\le(F^{\lle n}_{\!\lbe C})},\,\, m\otimes a\mapsto m\otimes \iota(a)$. However, $\iota^{\prime}$ is not in general an isomorphism of $k$-modules since it may fail to be bijective. Indeed, since $k$ is perfect, every element of $M^{\le(\e p^{\lle n})}=M\otimes_{\le k}\!(k,\le F^{\le n}_{\!\lbe k})$ can be written in the form $m\otimes 1$. This is also the case for $M^{\le(F^{\lle n}_{\!\lbe C})}$ if $C$ is perfect, whence $\iota^{\prime}$ is bijective in this case. However, if $C$ is not perfect, then $\iota^{\prime}$ may fail to be surjective, e.g., for $M=C$. Indeed, if $M=C$ and $n=1$, then $\iota^{\prime}\colon C\otimes_{k}(k,F_{\! k})\to C\otimes_{C} (C,F_{\! C})= (C,F_{\! C})$ maps $c\otimes 1$ to $c^{\e\lle p}$.
\end{caveat}

\smallskip

Now let $A$ be a $k$-algebra. By \eqref{idnt} and \eqref{pn} (respectively), for every pair of integers $m,n$ there exist canonical isomorphisms of $k$-algebras 
\begin{equation}\label{idntk}
\big(A^{\le(\, p^{\lle n})}\big)^{\le(\, p^{m})}\!\overset{\!\sim}{\to} A^{\le(\e p^{\le n\le+m})}
\end{equation}
and
\begin{equation}\label{pnk}
\jmath_{A}^{\e (m)}\,\colon {}^{p^{\le m}}\!\!\be A\overset{\!\sim}{\to} A^{\le(\, p^{\lle - m})}, \, a\mapsto a\otimes 1.
\end{equation}
Now, for every integer $n$, there exists a canonical  homomorphism  of $k$-algebras
\begin{equation}\label{trans}
A^{(\e p^{\le n+1})}\to A^{(\e p^{\le n})}
\end{equation}
which is defined on generators by $a\otimes x\mapsto a^{\e p}\be\otimes\lbe x$.  The composition of the map \eqref{idntk} for $m=1$ and \eqref{trans} is a homomorphism of $k$-algebras
\begin{equation}\label{relf}
F_{\! A^{(\e p^{\le n})}\be/k}\colon \big(\be A^{(\e p^{\le n})}\big)^{\be(\e p\le)}\to A^{(\e p^{\le n})}
\end{equation}
which is called the {\it relative Frobenius homomorphism of $A^{(\e p^{\le n})}$ over $k$}. 
We have $F_{\! A^{(\e p^{\le n})}\be/k}((a\otimes y)\otimes x)=x(a\otimes y)^{p}=a^{\lle p}\otimes xy^{\le\lle p}$ for every $a\in A$ and $x,y\in k$.

Now, for every $n\in\Z$, there exists a canonical isomorphism of {\it $\mathbb F_{\be p}$-algebras}
\begin{equation}\label{ion}
\iota_{\lbe A, \e n}\colon A^{(\e p^{n})}\overset{\!\sim}{\to}\big(\be A^{(\e p^{n})}\big)^{\be(\e p\le)}
\end{equation}
defined on generators by $\iota_{n}(a\otimes y)=(a\otimes y)\otimes 1$. Its inverse is defined on generators by $(a\otimes y)\otimes x\mapsto x^{\e p^{-1}}\be(a\otimes y)=a\otimes x^{\e p^{-1}}\!y$ ($a\in A,x,y\in k$). We have 
\begin{equation}\label{rel}
F_{\! A^{(\e p^{\le n})}\be/k}\be\circ\be \iota_{\lbe A, \e n}=F_{\be A^{(\e p^{n})}}. 
\end{equation}
In particular, $A$ is perfect (i.e., $F_{\! A}$ is an isomorphism) if, and only if, $F_{\be A/k}$ is an isomorphism. 
\medskip

Note that the composition 
\[
{}^{p^{-1}}\!\!\lbe A\  \underset{\!{}^{{}^{\sim}}}{\overset{\!\jmath_{\be A}^{\le(-1)}}{\lra}}  A ^{(\e p\le)}\underset{\!{}^{{}^{\sim}}}{\overset{\!\iota_{\! A, \e 0}^{-1}}{\lra}} A ,
\]
where $\jmath_{\be A}^{\le(-1)}$ is the isomorphism of $k$-algebras \eqref{pnk} and $\iota_{\lbe A, \e 0}^{-1}$ is the inverse of the isomorphism of $\mathbb F_{\be p}$-algebras \eqref{ion} when $n=0$, is the identity map on the underlying abelian groups. On the other hand, the composition
\[
{}^{p^{-1}}\!\!\lbe A \underset{ {}^{{}^{\sim}}}{\overset{\!\jmath_{\be A }^{\le(-1)}}{\lra}}  A^{\be(\e p\le)}\overset{\!F_{\!\lbe A\lbe/k}}{\lra} A,
\]
where $F_{\! A \lbe/k}$ is the map \eqref{relf} for $n=0$, is the homomorphism of $k$-algebras ${}^{p^{-1}}\!\!\lbe A \to A , a \mapsto a^{\le p}$. 
Similar results hold for $ A^{(\e p^{n})}$ in place of $A$.

We can now represent $A^{\pf}$ as an inductive limit in the category of $k$-algebras:

\begin{lemma}\label{rep} Let $A$ be a $k$-algebra. Then there exists a canonical isomorphism of $k$-algebras
\[
A^{\pf}=\varinjlim_{n\,\geq\e 0} A^{(\e p^{-n})},
\]
where the transition maps are the maps \eqref{trans}. 
\end{lemma}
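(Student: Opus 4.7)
The plan is to verify that the colimit $\widetilde A := \varinjlim_{n \geq 0} A^{(p^{-n})}$, taken in the category of $k$-algebras with transition maps \eqref{trans}, satisfies the universal property \eqref{pf-alg} of the perfect closure of $A$; the usual uniqueness-of-representing-object argument will then yield the desired canonical $k$-algebra isomorphism $A^{\pf} \overset{\sim}{\to} \widetilde A$.

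I would split the verification into two parts. First, I would show that $\widetilde A$ is a perfect $k$-algebra: for each $n \geq 0$ there is a ring isomorphism $\theta_n \colon A^{(p^{-n})} \to A$, $a \otimes x \mapsto x^{p^n} a$ (well-defined because $k$ is perfect), and under these identifications the transition map \eqref{trans} becomes the absolute Frobenius $F_A$. Since the forgetful functor from $k$-algebras to rings commutes with filtered colimits, the underlying ring of $\widetilde A$ is $\varinjlim_n (A, F_A) = A^{\pf}$, which is perfect. Second, for every perfect $k$-algebra $B$ I would produce, for each $n$, a natural bijection
\[
\Hom_{k\text{-alg}}\bigl(A^{(p^{-n})}, B\bigr) \;\longleftrightarrow\; \Hom_{k\text{-alg}}(A, B),
\]
sending $\psi\colon A \to B$ to the $k$-algebra map $g_n \colon a \otimes x \mapsto x \cdot F_B^{-n}(\psi(a))$ (well-defined by the perfectness of $B$; the inverse is $g \mapsto F_B^{\lle n} \circ g \circ \iota_n$, where $\iota_n(a) = a \otimes 1$). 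A short computation then shows that these bijections are compatible with the transition maps of the inverse system obtained by precomposition with \eqref{trans}, so that the inverse system on the right is constant and
\[
\Hom_{k\text{-alg}}(\widetilde A, B) \;=\; \varprojlim_n \Hom_{k\text{-alg}}\bigl(A^{(p^{-n})}, B\bigr) \;\simeq\; \Hom_{k\text{-alg}}(A, B).
\]

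Together with the canonical $k$-algebra homomorphism $A = A^{(p^0)} \hookrightarrow \widetilde A$ playing the role of $\phi_A$, this exhibits $\widetilde A$ as representing the perfect closure functor, and uniqueness supplies the required isomorphism. The one step that is not formal, and which I expect to be the main obstacle, is the compatibility of the $n$-th level bijections with the transition maps of the inverse system, which reduces to the identity $F_B^{-n}(\psi(a^p)) = F_B^{-n}(F_B(\psi(a))) = F_B^{-(n-1)}(\psi(a))$; this uses both that $\psi$ is a ring homomorphism (so it commutes with Frobenius) and that $F_B$ is invertible on the perfect target $B$.
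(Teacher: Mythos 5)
Your argument is correct, but it completes the proof by a different mechanism than the paper does. The first half of your proposal (the ring isomorphisms $\theta_n\colon A^{(p^{-n})}\to A$, $a\otimes x\mapsto x^{p^n}a$, intertwining the maps \eqref{trans} with $F_{\!A}$) is exactly the paper's construction: there the inverse maps $\zeta_n\colon a\mapsto a\otimes 1$ are used to define an explicit morphism $\varinjlim A_n\to\varinjlim A^{(p^{-n})}$, and the proof is finished by checking directly (the ``straightforward verification'') that this map is $k$-linear, using the twisted $k$-structure on $A^{\pf}$ ($\lambda\cdot[a_n]=[\lambda^{p^n}a_n]$) and the non-standard structure on $A^{(p^{-n})}$. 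You instead use the level-wise identification only to see that $\widetilde A=\varinjlim A^{(p^{-n})}$ is perfect, and you obtain the $k$-algebra isomorphism abstractly, by showing that $\widetilde A$ together with the canonical map $A=A^{(p^{0})}\to\widetilde A$ corepresents $B\mapsto\Hom_{k\text{-alg}}(A,B)$ on perfect $k$-algebras and invoking uniqueness of the representing object against \eqref{pf-alg}. This buys you freedom from matching the two twisted $k$-module structures by hand, but at the price of verifying, for every perfect $B$, the bijections $\Hom_{k\text{-alg}}(A^{(p^{-n})},B)\simeq\Hom_{k\text{-alg}}(A,B)$ (where perfectness of $B$ is needed both for well-definedness of $\psi\mapsto\bigl(a\otimes x\mapsto x\,F_B^{-n}(\psi(a))\bigr)$ and for the compatibility with \eqref{trans}), together with their naturality in $B$; these are essentially the adjunction bijections that the paper records separately in \eqref{idnt2}--\eqref{pn} and \eqref{adjt}. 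Both routes yield the same canonical isomorphism, since yours is pinned down by compatibility with $\phi_A$ and the paper's explicit map satisfies that compatibility.
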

\begin{proof} For every $n\geq 0$, the map $\zeta_{\le n}\colon A\to A^{(\e p^{\le -n})}, a\mapsto a\otimes 1$, is an isomorphism of {\it ${\mathbb F}_{\be p}$-algebras} whose inverse is defined on generators by $a\otimes x\mapsto x^{p^{\lle n}}\! a$ ($a\in A,x\in k$). Now, if $\alpha\in A^{\pf}=\varinjlim A_{n}$ is represented by $a_{n}\in A_{n}$, let $\varphi(\alpha)\in \varinjlim A^{(\e p^{\le -n})}$ be represented by $\zeta_{n}(a_{n})\in  A^{(\e p^{\le -n})}$. A straightforward verification shows that the map $\varphi\colon \varinjlim A_{n}\to \varinjlim A^{(\e p^{\le -n})}$ just defined is an isomorphism of $k$-algebras.
\end{proof}

The next lemma shows that the perfect closure of a $k$-algebra coincides with the perfect closure of its largest reduced quotient.

\begin{lemma}\label{red-pf} Let $A$ be a $k$-algebra. Then the canonical projection $A\to A_{\red}$ induces an isomorphism of $k$-algebras $A^{\pf}=(A_{\le\red})^{\pf}$. 
\end{lemma}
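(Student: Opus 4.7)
The plan is to produce the inverse isomorphism via the universal property \eqref{pf-alg}, using the elementary observation that the perfect closure is automatically reduced.

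First I would show that $A^{\pf}$ is reduced. If $x\in A^{\pf}$ satisfies $x^{\le p^{\lle n}}=0$ for some $n\geq 0$, then $F_{\! A^{\pf}}^{\le n}(x)=0$, and since $F_{\! A^{\pf}}$ is an automorphism of $A^{\pf}$, this forces $x=0$. Consequently, the structural homomorphism $\phi_{\be A}\colon A\to A^{\pf}$ vanishes on the nilradical $\mathfrak{n}(A)=\krn(A\to A_{\red})$, so it factors uniquely through a $k$-algebra homomorphism $\overbar{\phi}_{\be A}\colon A_{\red}\to A^{\pf}$. By the universal property \eqref{pf-alg} applied to $\overbar{\phi}_{\be A}$ (which is legitimate since $A^{\pf}$ is perfect), there exists a unique $k$-algebra homomorphism
\[
\beta\colon (A_{\red})^{\pf}\to A^{\pf}
\]
such that $\beta\circ\phi_{A_{\red}}=\overbar{\phi}_{\be A}$.

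In the reverse direction, the composition $A\to A_{\red}\overset{\phi_{A_{\red}}}{\lra} (A_{\red})^{\pf}$ is a $k$-algebra homomorphism into a perfect $k$-algebra, so again by \eqref{pf-alg} it factors uniquely as $\alpha\circ\phi_{\be A}$ for some $k$-algebra homomorphism
\[
\alpha\colon A^{\pf}\to (A_{\red})^{\pf}.
\]
The compositions $\beta\circ\alpha$ and $\alpha\circ\beta$ then satisfy $(\beta\circ\alpha)\circ\phi_{\be A}=\phi_{\be A}$ and $(\alpha\circ\beta)\circ\phi_{A_{\red}}=\phi_{A_{\red}}$ respectively (as one checks by chasing through the definitions of $\alpha,\beta$ and $\overbar{\phi}_{\be A}$). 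The uniqueness clause in \eqref{pf-alg}, applied to the identities of $A^{\pf}$ and $(A_{\red})^{\pf}$, forces $\beta\circ\alpha=\id_{A^{\pf}}$ and $\alpha\circ\beta=\id_{(A_{\red})^{\pf}}$, completing the proof.

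There is no serious obstacle: the entire argument is formal once one notes that perfect $\mathbb F_{\be p}$-algebras are automatically reduced. Alternatively, one could argue directly from the inductive-limit presentation of Lemma \ref{rep}: surjectivity of $A^{\pf}\to (A_{\red})^{\pf}$ follows from exactness of $\varinjlim$ together with the fact that base change along $F_{\!\be k}^{\le n}$ preserves surjections, while injectivity amounts to the observation that if a lift $a\in A$ of $\bar a\in A_{\red}$ satisfies $\bar a^{\le p^{\lle m}}=0$, then $a^{\le p^{\lle m}}$ is nilpotent in $A$ and hence $a^{\le p^{\lle m+k}}=0$ for some $k\geq 0$.
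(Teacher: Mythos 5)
Your argument is correct, but it is a genuinely different route from the one in the paper. The paper proves the lemma by citing Greenberg's explicit computation: the arguments of [Gre, p.~315] show that the induced map $A^{\pf}\to (A_{\red})^{\pf}$ is surjective with kernel contained in $\mathrm{Nil}\big(A^{\pf}\big)$, and then one concludes because a perfect ring is reduced (quoted from Bourbaki); this is essentially the element-level, direct-limit argument that you only sketch in your closing alternative. Your main proof instead is purely formal: you prove reducedness of $A^{\pf}$ directly from bijectivity of Frobenius (rather than citing it), deduce that $\phi_{A}$ factors through $A_{\red}$, and then build mutually inverse maps $\alpha,\beta$ from the universal property \eqref{pf-alg}. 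This is sound; the only step you wave at, namely $(\alpha\circ\beta)\circ\phi_{A_{\red}}=\phi_{A_{\red}}$, does hold, but note that it uses surjectivity of $A\to A_{\red}$ to cancel the projection in $\alpha\circ\overline{\phi}_{A}\circ\pi=\phi_{A_{\red}}\circ\pi$, so it is worth saying that one line explicitly. What your approach buys is self-containedness and conceptual clarity: since every perfect $k$-algebra is reduced, $\Hom_{k\text{-alg}}(A,B)=\Hom_{k\text{-alg}}(A_{\red},B)$ for all perfect $B$, so $A^{\pf}$ and $(A_{\red})^{\pf}$ represent the same functor and must agree; the paper's proof, by contrast, leans on an external reference but gives the sharper intermediate information (surjectivity with nilpotent kernel) that is sometimes useful on its own.
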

\begin{proof} The $p\e$-radical $A_{p^{\infty}}$ of $A$ considered in \cite[p.~314]{gre} coincides with the nilradical $\text{Nil}(A)$ of $A$. Now the arguments in \cite[4, p.~315]{gre} show that the map $A^{\pf}\to (A_{\e\red})^{\pf}$ induced by the canonical projection $A\to A_{\e\red}=A/\text{Nil}(A)$ is surjective with kernel contained in $\text{Nil}\big(\lbe A^{\pf}\e\big)$. But $\text{Nil}\big(\lbe A^{\pf}\e\big)=0$ since a perfect ring is reduced \cite[V, \S1, no.~4, p.~A.V.5]{bou2}. 
\end{proof}

\begin{remark}\label{ext} The preceding constructions remain valid if $k$ is replaced by any perfect $\mathbb F_{\be p}$-algebra $C$. In particular, for every $C$-algebra $A$, there exists a canonical  isomorphism of $C$-algebras 
\[
A^{\pf}=\varinjlim_{n\e\geq\e 0} A^{\le(F^{\lle -n}_{\!\lbe C})},
\]
where the transition maps are given by $a\otimes c\mapsto a^{p}\otimes c$, where $a\in A$ and $c\in C$.  When $C=\mathbb F_{\be p}$, the preceding isomorphism agrees with the case $C=\mathbb F_{\be p}$ of \eqref{pres} (indeed, $F_{\be C}$  is the identity morphism of $C$, $A^{\le(F^{\lle -n}_{\!\lbe C})}=A$ for every $n$ and the corresponding transition morphisms are all equal to $F_{\!\be A}$). 
\end{remark}

The perfect closure of some rings can be described explicitly. For example:

\begin{example}\label{pf-rem} Let $A$ be an integral domain endowed with an $\mathbb F_{\be p}$-algebra structure, let $L$ be the field of fractions of $A$ and let $\overline{L}$ be a fixed algebraic closure of $L$. Clearly $A$ is a subring of the perfect ring $\overline{L}$ and  $a^{\le p^{-n}}\in\overline{L}$ for every $a\in A$ and $n\geq 0$. Then
\[
A^{\pf}=\bigcup_{n\e\geq\e 0} \{a^{\le p^{-n}}\!; a\in A\}=A\e[\e a^{\le p^{-n}}\!; a\in A, n\in\N \e]\subseteq\overline{L}.
\]
See \cite[V, \S1, no.~4, Proposition 3, p.~A.V.6]{bou2}. Note that the map $\phi_{\be A}\colon A\to A^{\pf}$ is the canonical inclusion. In particular, let 
$A=k\e[\{ x_{i}\}]$ be the polynomial $k$-algebra on a (possibly infinite) family of independent indeterminates $\{x_{i}\}_{i\e\in\e I}$. Then 
\[
A^{\le\pf}=k\e[\e x_{i}^{\e p^{-n}}\!;\e i\in I, n\geq 0\e]=\varinjlim_{n\e\geq\e 0}k\e[\e x_{i}^{\le p^{-n}}\!;\e i\in I\e]
\]
inside an algebraic closure of $k\e(\{ x_{i}\})$. Since every ring $k\e[\e x_{i}^{\le p^{-n}}\!;\e i\in I\e]$ is a free $A$-module, we conclude that $A^{\le\pf}$ is flat over $A$ \cite[I, \S2, no.~3, Proposition 2, p.~14]{bou}. Further, since $A^{\le\pf}$ is integral over $A$, $A^{\le\pf}$ is, in fact, faithfully flat over $A$ \cite[Chapter 2, Theorem 3(2), p.~28, and Theorem 5(i), p.~33]{mat2}.
\end{example}

\begin{remarks}\indent
\begin{enumerate}
\item[(a)]  In connection with the preceding Example, there exist $\mathbb F_{\be p}\e$-algebras $A$ such that $A^{\pf}$ is not flat over $A$. Indeed, let $A$ be any $\mathbb F_{\be p}$-algebra such that $A_{\red}$ is perfect and $\text{Nil}(A)^{2}\neq \text{Nil}(A)$, e.g., $A=\mathbb F_{\be p}[x]/(x^{\le 2})$. Then
$A^{\pf}=(A_{\red})^{\pf}=A_{\red}$ by Lemma \ref{red-pf}, but $A_{\red}$ is not flat over $A$ (otherwise $\text{Nil}(A)\otimes_{A}A_{\red}=\text{Nil}(A)/\text{Nil}(A)^{2}$ would inject as a nonzero nilideal in $A_{\le\red}$).
\item[(b)] If $A$ is any $k$-algebra, then $A^{\pf}$ can be obtained from $A$ in two steps, the first of which is faithfully flat over $A$ but the second may fail to be flat over $A$. Indeed, fix a polynomial $k$-algebra $B=k\e[\{ x_{i}\}]$, where $\{x_{i}\}_{i\in I}$ is a (possibly infinite) family of independent indeterminates, and a surjective homomorphism of $k$-algebras $q\colon B\to A$. Set
\[
\widetilde{A}=B^{\le\pf}\be\otimes_{\e B}\be A
\]
with its natural $k$-algebra structure. Then $\big(\lbe\widetilde{A}\,\big)^{\lbe\pf}=B^{\le\pf}\be\otimes_{\e B^{\le\pf}}\be A^{\pf}= A^{\pf}$ by Lemma \ref{pf-prod}. Now, since the map $B^{\le\pf}\to \widetilde{A}$ induced by $q$ is surjective, the Frobenius endomorphism $F_{\be\widetilde{A}}\colon \widetilde{A}\to \widetilde{A}$ is surjective as well, i.e., $\widetilde{A}=\big(\lbe\widetilde{A}\,\big)^{p}$. It follows that $\big(\lbe\widetilde{A}\,\big)_{\be\red}$ is a perfect $k$-algebra, whence $\big(\lbe\widetilde{A}\,\big)_{\be\red}=\big(\lbe\widetilde{A}\,\big)^{\pf} =A^{\le\pf}$. Further, since $B^{\le\pf}$ is faithfully flat over $B$ by Example \ref{pf-rem}, $\widetilde{A}$ is faithfully flat over $A$. Thus $A^{\le\pf}$ can be obtained from $A$ in two steps, as claimed:
\[
A\hookrightarrow \widetilde{A}\twoheadrightarrow \big(\lbe\widetilde{A}\,\big)_{\be\red}=A^{\le\pf},
\]
where the first extension $\widetilde{A}/\be A$ is faithfully flat but the second one may not be flat, as noted in (a). Compare \cite[Lemma 0.1, p.~18]{lip}, where the algebra $\widetilde{A}$ was denoted by $\bar{A}$.
\item[(c)] In general, the $k$-algebra $\widetilde{A}$ considered in (b) depends on the choice of the pair $(B,q)$. For example, if $A=\mathbb F_{\be p}$ and we choose successively $(B,q)=(\mathbb F_{\be p}\le,1_{\mathbb F_{\lbe p}})$, where $1_{\mathbb F_{\lbe p}}$ is the identity map of $\mathbb F_{\be p}$, and $(B,q)=(\mathbb F_{\be p}[x],\text{ev}_{0})$, where $\text{ev}_{0}\colon \mathbb F_{\be p}[x]\to \mathbb F_{\be p}$ is the natural evaluation-at-zero map $f(x)\mapsto f(0)$, then we obtain correspondingly $\widetilde{A}=\mathbb F_{\be p}$ and $\widetilde{A}= \mathbb F_{\be p}\e[\e x^{\le p^{-n}}\!; n\le\in\le\N_{0}\e]/(x)$. Note that the latter algebra is non-reduced, which shows that $\widetilde{A}$ can be non-reduced even if $A$ is reduced.
\end{enumerate}
\end{remarks}

\section{The inverse perfection of a scheme}\label{perf}

In this Section we extend the constructions of the previous Section to the category of schemes.

\smallskip

Let $p$ be a prime number. For every $\mathbb F_{\be p}$-scheme $Y$, let $F_{\le Y}\colon Y\to Y$ denote the absolute Frobenius endomorphism of $Y$ \cite[$\text{VII}_{\text{A}}$, \S4.1]{sga3}. If  $A$ is an $\mathbb F_{\be p}$-algebra, then
\begin{equation}\label{frob-aff}
F_{\le \spec A}=\spec F_{\!\be A}.
\end{equation}
The scheme $Y$ is said to be {\it perfect} if $F_{\le Y}$ is an isomorphism. Given an $\mathbb F_{\be p}\le$-scheme $Y$, there exists a pair $(Y^{\lle\pf},\phi_{\le\le Y})$ consisting of a perfect $\mathbb F_{\be p}\le$-scheme $Y^{\lle\pf}$ and a morphism of $\mathbb F_{\be p}\le$-schemes $\phi_{\le\le Y}\colon Y^{\lle\pf}\to Y$ such that, for every perfect $\mathbb F_{\be p}\le$-scheme $Z$ and every morphism of $\mathbb F_{\be p}\le$-schemes $\psi\colon Z\to Y$, there exists a unique morphism of $\mathbb F_{\be p}\le$-schemes $\psi^{\e\pf}\colon Z\to Y^{\lle\pf}$ such that $\phi_{\le\lle Y}\be\circ\be\psi^{\e\pf}=\psi$.  Following \cite[Theorem 8.5.5(c)]{kl}, we call $Y^{\lle\pf}$ the {\it inverse perfection} of $Y$. We should note that $Y^{\lle\pf}$ is called the {\it perfection} of $Y$ in both \cite[p.~ 226]{mi2} and \cite[p.~3]{bw}, and the {\it perfect closure} of $Y$ in \cite[p.~317]{gre}, where it is denoted by $Y^{1\lbe/p^{\infty}}$. In the latter reference, $Y^{\lle\pf}$ is constructed by globalizing the functor on $\mathbb F_{\be p}\le$-algebras $A\mapsto A^{\pf}$. In particular, $(\spec A)^{\pf}=\spec A^{\pf}$ and $\phi_{\e\spec\lbe A}=\spec \phi_{\be A}$.

We will write $(\mathrm{Perf}/\le\mathbb F_{\be p})$ for the category of perfect $\mathbb F_{\be p}$-schemes.  It follows from the universal property described above  (or, alternatively, from \eqref{lab}) that the {\it inverse perfection functor}
\begin{equation}\label{pf-fun2}
(\mathrm{Sch}/\le\mathbb F_{\be p})\to(\mathrm{Perf}/\le\mathbb F_{\be p}), \quad Y\mapsto Y^{\lle\pf},
\end{equation}
is covariant and right-adjoint to the inclusion functor $(\mathrm{Perf}/\le\mathbb F_{\be p}) \to
(\mathrm{Sch}/\le\mathbb F_{\be p})$, i.e., for every perfect $\mathbb F_{p}$-scheme $Z$, there exists a canonical bijection 
\begin{equation}\label{pf}
\Hom_{\le\mr{Sch}/\lle\mathbb F_{\be p}}(Z,Y)\overset{\!\sim}{\to}\Hom_{\e
\mr{Perf}/\lle\mathbb F_{\be p}}\be\big(Z,Y^{\lle\pf}\le\big), \, \psi\mapsto\psi^{\le\pf}.
\end{equation}

\begin{remark}\label{uh}
It is shown in \cite[p.~317]{gre} that the canonical morphism $\phi_{\le\spec A}=\spec \phi_{\lbe A}\colon \spec A^{\pf}\to\spec A$ is a homeomorphism. Further, for every $\alpha\in A^{\pf}$ there exists an integer $n\geq 0$ such that $\alpha^{\e p^{n}}\!\in\img\big[\e\phi_{\lbe A}\colon A\to A^{\pf}\e\big]$ by \cite[p.~314]{gre}. Consequently, $\phi_{\e \spec A}$ is integral and radicial \cite[Proposition 3.7.1(b), p.~246]{ega1}. Thus $\phi_{\e\spec A}$ is, in fact, a universal homeomorphism \cite[$\text{IV}_{4}$, Corollary 18.12.11]{ega}.  It now follows from \cite[Proposition 3.8.2(v), p.~249]{ega1} that $\phi_{Y}\colon Y^{\pf}\to Y$ is a universal homeomorphism for every $\mathbb F_{\be p}\le$-scheme $Y$.
\end{remark}

More generally, if $W$ is any perfect $\mathbb F_{\be p}\le$-scheme (in lieu of $W=\spec \mathbb F_{\be p}$) and $Y$ is a $W$-scheme, then there exists a canonical bijection 
\begin{equation}\label{pfg}
\Hom_{\le\mr{Sch}/W}(Z,Y)\overset{\!\sim}{\to} \Hom_{\e \mr{Perf}/W}\big(Z,Y^{\lle\pf}\le\big),\, \psi\mapsto\psi^{\le\pf},
\end{equation}
whose inverse is given by 
\begin{equation}\label{pfc}
\Hom_{\e
\mr{Perf}/W}\big(Z,Y^{\lle\pf}\le\big)\overset{\!\sim}{\to}
\Hom_{\le\mr{Sch}/W}(Z,Y),\,\omega\mapsto \phi_{\le\le Y}\circ\omega. 
\end{equation}
The bijections \eqref{pfg} and \eqref{pfc} are induced by \eqref{lab2} and \eqref{cab}, respectively.
\medskip

In the following, we focus on the case $W=\spec k$, where $k$ is a perfect field of characteristic $p$.

Note that, if $Y$ is a $k$-scheme, then Lemma \ref{red-pf} yields a canonical  isomorphism of $k$-schemes
\begin{equation}\label{pf=pf-red}
Y^{\pf}=(Y_{\red})^{\pf}.
\end{equation}
The following alternative construction of $Y^{\pf}$ follows from Lemma \ref{rep}.

For every integer $n$, let $(\spec k,\spec F_{\! k}^{\e n})$ denote the scheme  $\spec k$ regarded as a $k$-scheme via $\spec F_{\! k}^{\e n}$. If $Y$ is a $k$-scheme, set 
\begin{equation}\label{ypn}
Y^{(\e p^{\le n})}=Y\times_{\spec k}(\spec k,\spec F_{\! k}^{\e n}).
\end{equation}
Then, for every $k$-scheme $Z$, \eqref{idnt2} induces a bijection
\begin{equation}\label{adjt}
\Hom_{\e\mr{Sch}/k}\lbe\big(Y^{(\e p^{-n\le})},Z\big)\simeq \Hom_{\e\mr{Sch}/k}\lbe(Y,Z^{(\e p^{\le n\le})}\big).
\end{equation}
Now, for every pair of integers $m,n$, the isomorphism of $k$-algebras \ref{idntk} induces an isomorphism of $k$-schemes
\begin{equation}\label{idntg}
Y^{(\e p^{\le n\le +m})}\!\overset{\!\sim}{\to}\big(Y^{(\e p^{\le n})}\big)^{\be(\e p^{\le m})}.
\end{equation}
Setting $m=1$ above, we obtain an isomorphism of $k$-schemes
\begin{equation}\label{ko}
Y^{(\e p^{\le n+1})}\stackrel{\sim}{\to}\big(Y^{(\e p^{\le n})}\big)^{\be(\e p)}.
\end{equation}
On the other hand, by \eqref{trans}, there exists a canonical morphism of $k$-schemes 
\begin{equation}\label{trans2}
Y^{(\e p^{\le n})}\to Y^{(\e p^{\le n+1})}.
\end{equation} 
The composition of  \eqref{ko} and \eqref{trans2} is the {\it relative Frobenius morphism of $Y^{(\e p^{\le n})}$ over $k$:}
\begin{equation}\label{rel-frob}
F_{ Y^{(\e p^{\le n})}/k}\colon Y^{(\e p^{\le n})}\longrightarrow \big(Y^{(\e p^{\le n})}\big)^{\be(\e p)}.
\end{equation}
By \eqref{rel}, we have 
\begin{equation}\label{rel2}
\mathrm{pr}_{\lbe Y^{(\e p^{\le n})}}\circ F_{ Y^{(\e p^{\le n})}\be/k}=
F_{\lbe Y^{(\le p^{\le n})}},
\end{equation}
where $\mathrm{pr}_{\lbe Y^{(\le p^{\le n})}}\colon \big(Y^{(\e p^{\le n})}\big)^{\be(\e p)}\to Y^{(\e p^{\le n})}$ is the first projection. It follows from \eqref{rel2} that $F_{ Y^{(\le p^{\le n})}/k}$ is a universal homeomorphism (see \cite[XV, \S1, Proposition 2(a), p.~445]{sga5}). Consequently, the transition morphisms $Y^{(\e p^{-n})}\to Y^{(\e p^{-n+1})}$ \eqref{trans2} of the  projective system of $k$-schemes $\big(Y^{(\e p^{\le -n})}\big)_{n\e\geq\e 0}$ are universal homeomorphisms (in particular they are affine \cite[II, (6.1.2)]{ega}). Now Lemma \ref{rep} and \eqref{pf=pf-red} show that there exist canonical isomorphisms of $k$-schemes
\begin{equation}\label{ypf}
Y^{\pf}=\varprojlim_{n\e\geq\e 0} Y^{(\e p^{-n})}=\varprojlim_{n\e\geq\e 0} (\le Y_{\be\red})^{(\e p^{-n})}.
\end{equation}

\begin{remark}\label{r-pfe} By Remark \ref{ext}, the preceding considerations extend to a relative setting where $\spec k$ is replaced by any nonempty perfect $\mathbb F_{\be p}\le$-scheme $T$. Compare \cite[pp.~226-227]{mi2}. Note also that the inverse perfection $Y^{\pf}$ of a $T$-scheme $Y$ depends only on its structure as a scheme over $\mathbb F_{\be p}\le$, whereas the presentation \eqref{ypf} depends on the structure of $Y$ as a scheme over $T$.
\end{remark}

\begin{proposition}\label{pf-morph} Consider, for a morphism of $k$-schemes, the property of being:
\begin{enumerate}
\item[(i)] separated;
\item[(ii)] quasi-compact;
\item[(iii)] quasi-separated;
\item[(iv)] a closed immersion;
\item[(v)] affine;
\item[(vi)] flat;
\item[(vii)] an open immersion;
\item[(viii)] an immersion.
\end{enumerate}
If $\bm{P}$ denotes one of the above properties and $f\colon X\to Y$ has property $\bm{P}$, then $f^{\pf}\colon X^{\pf}\to Y^{\pf}$ also has property $\bm{P}$.
\end{proposition}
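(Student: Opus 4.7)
The plan is to realize $f^{\pf}$ as the projective limit of base changes of $f$ and then appeal to the stability results of Section \ref{plim-sec}. By \eqref{ypf} we have canonical identifications $X^{\pf}=\varprojlim_{n\geq 0} X^{(\e p^{-n})}$ and $Y^{\pf}=\varprojlim_{n\geq 0} Y^{(\e p^{-n})}$; since by \eqref{ypn} the formation of $Z^{(\e p^{-n})}=Z\times_{\spec k}(\spec k,\spec F_{\! k}^{-n})$ is functorial in the $k$-scheme $Z$, the morphism $f$ induces a morphism of projective systems $\big(f^{(\e p^{-n})}\big)_{n\geq 0}$ whose limit is $f^{\pf}$. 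Crucially, each $f^{(\e p^{-n})}\colon X^{(\e p^{-n})}\to Y^{(\e p^{-n})}$ is nothing but the base change of $f$ along the morphism $(\spec k,\spec F_{\! k}^{-n})\to \spec k$.

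I would next observe that all eight of the listed properties are stable under arbitrary base change by standard results of \cite{ega, ega1}. Consequently, if $f$ has property $\bm{P}$, then $f^{(\e p^{-n})}$ has property $\bm{P}$ for every $n\geq 0$. With this reduction in place, properties (i)--(vi) follow at once by applying Proposition \ref{lim-prop} to the morphism of projective systems $(f^{(\e p^{-n})})$.

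The remaining cases (vii) and (viii) are more delicate because, as Example \ref{exm} illustrates, open immersions are not in general preserved under arbitrary projective limits. For these I would instead invoke Proposition \ref{lim-prop2}, whose additional hypothesis is that the transition morphisms of both systems be homeomorphisms. This is exactly the observation made immediately after \eqref{rel2}: the transition morphisms \eqref{trans2} are universal homeomorphisms (being induced by the relative Frobenius), and in particular they are homeomorphisms on underlying topological spaces. Applying Proposition \ref{lim-prop2} to $(f^{(\e p^{-n})})$ then shows that $f^{\pf}$ is an immersion (respectively open immersion) whenever $f$ is. The main technical obstacle of the argument is therefore this subtlety for (open) immersions, which is precisely what motivates the separate statement of Proposition \ref{lim-prop2}.
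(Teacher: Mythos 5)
Your proposal is correct and follows essentially the same route as the paper: base-change stability of the listed properties combined with Proposition \ref{lim-prop} for (i)--(vi), and Proposition \ref{lim-prop2} for (vii)--(viii), using the fact that the transition morphisms \eqref{trans2} are (universal) homeomorphisms. Your explicit remark that $f^{(\e p^{-n})}$ is the base change of $f$ along $(\spec k,\spec F_{\! k}^{-n})\to\spec k$ and that $f^{\pf}=\varprojlim f^{(\e p^{-n})}$ just spells out what the paper leaves implicit.
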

\begin{proof} Properties (i)-(vi) are stable under base change. Thus, if $\bm{P}$ denotes one of these properties and $f$ has property $\bm{P}$, then the morphism $X^{(\e p^{-n})}\to Y^{(\e p^{-n})}$ induced by $f$ has property $\bm{P}$ as well for every integer $n\e\geq\e 0$. Consequently, by \eqref{ypf} and Proposition \ref{lim-prop}, $f^{\pf}$ has property $\bm{P}$. In the case of properties (vii) and (viii), the proposition follows from Proposition \ref{lim-prop2} since the transition morphisms of the systems $\big(X^{(\e p^{-n})}\big)$ and $\big(Y^{(\e p^{-n})}\big)$ are (universal) homeomorphisms.
\end{proof}

\begin{remarks}\label{top-prop} \indent
\begin{enumerate}
\item [(a)] Let $f\colon X\to Y$ be a morphism of $k$-schemes. Then, for every perfect $k$-scheme $Z$, there exists a canonical commutative diagram of sets
\[
\xymatrix{\Hom_{\e\mr{Sch}/k}(Z,X)\ar[r]\ar[d]^{\simeq}&\Hom_{\e\mr{Sch}/k}(Z,Y)\ar[d]^{\simeq}\\ 
\Hom_{\e\mr{Perf}/k}\big(Z,X^{\lle\pf}\le\big)\ar[r]&\Hom_{\e\mr{Perf}/k}\big(Z,Y^{\lle\pf}\le\big)\,,}
\]
where the vertical maps are the bijections \eqref{pfc} for $W=\spec k$ and the top (respectively, bottom) horizontal map is given by $\psi\mapsto f\circ\psi$ (respectively, $\varphi\mapsto f^{\le\pf}\circ\varphi$). It follows that, if the top horizontal map in the above diagram is a bijection for every affine perfect $k$-scheme $Z$, then $f^{\pf}\colon X^{\pf}\to Y^{\pf}$  is an isomorphism of perfect $k$-schemes.
\item[(b)] Since the canonical morphism $Y^{\pf}\to Y$ is a universal homeomorphism by Remark \ref{uh}, it is clear that if $\bm{P}$ is a purely topological property of a morphism of $k$-schemes (e.g., that of being injective, surjective, dominant, etc.) and $f\colon X\to Y$ has property $\bm{P}$, then $f^{\pf}\colon X^{\pf}\to Y^{\pf}$ also has property $\bm{P}$. Further, if $\{U_{i}\}_{i\le\in\le I}$ is an open covering of $Y$, then $\{U_{i}^{\pf}\}_{i\le\in\le I}$ is an open covering of $Y^{\pf}$. In particular, if $Y=\coprod_{\le i\in I} Y_{\be i}$, then $Y^{\pf}=\coprod_{\le i\in I} Y_{\be i}^{\pf}$.

\item[(c)] Let $f\colon X\to Y$ be an fpqc morphism of $k$-schemes, i.e., $f$ is faithfully flat and every quasi-compact open subset of $Y$ is the image of a quasi-compact open subset of $X$ \cite[Proposition 2.33, p.~27]{v}. It is clear from (b) and part (vi) of the proposition that $f^{\pf}\colon X^{\pf}\to Y^{\pf}$ is an fpqc morphism as well. 
\item[(d)] If follows from Lemma \ref{pf-prod} that, if $Y$ is a $k$-scheme and $X$ and $Z$ are $Y$-schemes, then $(X\times_{Y}Z)^{\pf}=X^{\pf}\times_{Y^{\lbe\pf}}Z^{\e\pf}$. 

\item[(e)] Let $Y$ be a $k$-group scheme. It follows from (d) and \eqref{pf} that $Y^{\pf}$ is a $k$-group scheme. On the other hand, for every integer $n$, $Y^{(\le p^{\le n})}$ is a $k$-group scheme and the relative Frobenius morphism \eqref{rel-frob} is a homomorphism \cite[${\rm VII_{A}}$, 4.1.2]{sga3}. We conclude that $\big(Y^{(\le p^{\le -n})}\big)_{n\e\geq\e 0}$ is a projective system in the category of $k$-group schemes and \eqref{ypf} is an isomorphism of $k$-group schemes. In particular, for every $m\e\geq\e 0$, the composition of \eqref{ypf} and the canonical projection morphism $\varprojlim Y^{(\e p^{\le -n})}\to Y^{(\e p^{\le -m})}$ is a homomorphism of $k$-group schemes $Y^{\pf}\to Y^{(\e p^{\le -m})}$.

\item[(f)] It follows from \cite[V, \S 1, no.~4, p.~A.V.5]{bou2} that a perfect $k$-scheme is reduced. In particular, $Y^{\pf}$ is a reduced $k$-scheme for every $k$-scheme $Y$.
\end{enumerate}
\end{remarks}

\begin{proposition}\label{fpf} Let $X$ be a $k$-scheme.
\begin{enumerate}
\item[(i)] If $X$ is \'etale over $k$, then $X$ is perfect.
\item[(ii)] If $X$ is finite over $k$, then $X^{\pf}$ is finite and \'etale over $k$.
\end{enumerate}
\end{proposition}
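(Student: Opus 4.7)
The plan is to reduce both assertions to the algebraic observation that every finite field extension of the perfect field $k$ is itself perfect. This follows from the primitive element theorem: since $k$ is perfect, any finite extension $L/k$ is separable, so $L=k(\alpha)$ with $\alpha$ separable over $k$; then $\alpha$ is separable over $k(\alpha^{p})$ while also satisfying $(x-\alpha)^{p}=0$ there, forcing $\alpha\in k(\alpha^{p})$, whence $L=L^{p}$.

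For (i), I would invoke the standard structure theorem for étale $k$-schemes: any $X$ étale over $k$ decomposes as $X=\coprod_{i}\spec k_{i}$ with each $k_{i}/k$ a finite (separable) field extension. By the preceding observation each $k_{i}$ is perfect, so $F_{\spec k_{i}}=\spec F_{k_{i}}$ is an isomorphism for every $i$. As the absolute Frobenius is functorial and commutes with disjoint unions (compare Remark \ref{top-prop}(b) for the analogous statement for $\phi_{Y}$), the absolute Frobenius of $X$ is an isomorphism, so $X$ is perfect.

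For (ii), write $X=\spec A$ with $A$ a finite-dimensional $k$-algebra. Being Artinian, $A_{\red}$ splits as a finite product $\prod_{i}L_{i}$ of finite field extensions of $k$; by the observation above every $L_{i}$ is perfect, hence so is $A_{\red}$ (Frobenius acts componentwise on a product). The universal property \eqref{tri} applied to $\mr{id}_{A_{\red}}$ gives $(A_{\red})^{\pf}=A_{\red}$, and Lemma \ref{red-pf} then yields $A^{\pf}=(A_{\red})^{\pf}=A_{\red}$. Thus $X^{\pf}=X_{\red}=\coprod_{i}\spec L_{i}$, which is finite, and (by (i), or directly from its explicit description) étale over $k$.

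The only real content is the perfectness of finite extensions of $k$; everything else is bookkeeping via Lemma \ref{red-pf}, the universal property of the perfect closure, and the structure of étale and finite schemes over a field, so I do not expect a genuine obstacle.
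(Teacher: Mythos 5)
Your proposal is correct and takes essentially the same route as the paper: decompose the \'etale (resp.\ the reduced finite) $k$-scheme into spectra of finite separable extensions of the perfect field $k$, observe these are perfect, and use Lemma \ref{red-pf} (i.e.\ \eqref{pf=pf-red}) to get $X^{\pf}=X_{\red}$, which is finite \'etale. One small point: the \'etaleness of $X_{\red}$ does not follow from part (i) (which goes in the other direction), but from its explicit description as a finite disjoint union of spectra of finite separable extensions of $k$ --- the alternative justification you already supply, and which is exactly what the paper extracts from its Bourbaki citation.
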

\begin{proof} If $X$ is \'etale over $k$, then $X\simeq \coprod_{\le i\le\in\le I}\spec k_{\le i}$ for some family $\{k_{\le i}\}$ of finite separable field extensions of $k$ \cite[$\text{IV}_{4}$, Corollary 17.4.2$({\text d}^{\prime}\e)$]{ega}. Since $k$ is perfect, each $k_{\le i}$ is perfect as well and assertion (i) follows from Remark \ref{top-prop}(b). Now assume that $X=\spec B$ is finite over $k$. Then $X_{\red}=\spec B_{\red}$ is finite  and \'etale over $k$ by \cite[V, \S6, no.~7, Lemma 5, p.~A.V.35]{bou2}. Thus, by (i) and \eqref{pf=pf-red}, $X^{\pf}=(X_{\red})^{\pf}=X_{\red}$ is finite and \'etale over $k$, as claimed.
\end{proof}

\begin{lemma}\label{pf=1} Let $G$ be a $k$-group scheme locally of finite type such that
$G\big(\e\kbar\,\big)$ is the trivial group. Then $G^{\e\pf}$  is the trivial $k$-group scheme.
\end{lemma}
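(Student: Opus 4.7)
The plan is to reduce to the case where $G$ is finite over $k$ and then invoke Proposition \ref{fpf}(ii), finishing with a count of $\kbar$-rational points. The argument proceeds in three steps: (1) show that $G$ is in fact finite over $k$; (2) apply Proposition \ref{fpf}(ii) to deduce that $G^{\pf}$ is finite and \'etale over $k$; (3) use the adjunction \eqref{pfc} applied to $\spec\kbar$ to conclude that $G^{\pf}$ has exactly one $\kbar$-rational point, forcing $G^{\pf} = \spec k$.

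For Step (1), I would pass to the base change $G_{\kbar}$. Since $G_{\kbar}$ is locally of finite type over $\kbar$ it is a Jacobson scheme, and every closed point has residue field $\kbar$ by the Nullstellensatz; hence each connected component of $G_{\kbar}$ contains a $\kbar$-rational point. The hypothesis $G_{\kbar}(\kbar) = G(\kbar) = \{1\}$ therefore forces $G_{\kbar}$ to be connected, so $G_{\kbar} = G_{\kbar}^{\e 0}$, which is of finite type by \cite[${\rm VI_{A}}$, Proposition 2.4(ii)]{sga3}. Because $\kbar$ is perfect, $(G_{\kbar})_{\red}$ is a reduced closed subgroup scheme; generic smoothness together with the homogeneity of a reduced group scheme yields smoothness of $(G_{\kbar})_{\red}$ over $\kbar$, and being connected with a unique $\kbar$-point it must equal $\spec\kbar$. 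Thus $G_{\kbar}$ is topologically a single point, is of finite type, and has $(G_{\kbar})_{\red} = \spec\kbar$; it is therefore the spectrum of a local Artinian $\kbar$-algebra and, in particular, finite over $\kbar$. Since finiteness is local for the fpqc topology on the base, $G$ itself is finite over $k$.

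Steps (2) and (3) are then short. By Proposition \ref{fpf}(ii), $G^{\pf}$ is finite and \'etale over $k$, and since $k$ is perfect, $G^{\pf} \simeq \coprod_{i\in I}\spec k_{\le i}$ for a finite family $\{k_{\le i}/k\}$ of finite separable extensions. The adjunction \eqref{pfc} applied to $W = \spec k$ and the perfect $k$-scheme $Z = \spec\kbar$ gives $G^{\pf}(\kbar) = G(\kbar) = \{1\}$, whereas $G^{\pf}(\kbar) = \coprod_{i\in I}\Hom_{k}(k_{\le i},\kbar)$ has cardinality $\sum_{i}[k_{\le i}:k]$; there must therefore be a unique index $i$ with $[k_{\le i}:k] = 1$, giving $G^{\pf} = \spec k$, as required. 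The main obstacle is Step (1)---passing from the set-theoretic condition $G(\kbar) = \{1\}$ to finiteness of $G$---which rests on the identification of closed points of $G_{\kbar}$ with $\kbar$-rational points and on the smoothness of the reduced subgroup scheme over the algebraically closed perfect base.
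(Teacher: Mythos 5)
Your argument is correct, but it takes a longer route than the paper. The paper's proof is short: by \eqref{pf=pf-red} it suffices to show $G_{\red}$ is trivial; since $k$ is perfect, formation of $G_{\red}$ commutes with the base change to $\kbar$, and then one quotes \cite[II, \S5, Proposition 4.3(a)]{dg}, which says precisely that a reduced $\kbar$-group scheme locally of finite type with trivial group of $\kbar$-points is trivial. Your Step (1) essentially reproves that cited fact by hand (Jacobson property forcing connectedness, smoothness of the reduced subgroup scheme over the perfect field $\kbar$, hence $(G_{\kbar})_{\red}=\spec\kbar$), which is fine; but having reached $(G_{\kbar})_{\red}=\spec\kbar$ you could stop much sooner: faithfully flat descent gives $G_{\red}=\spec k$, and then $G^{\pf}=(G_{\red})^{\pf}=\spec k$ by \eqref{pf=pf-red}, with no need for the detour through finiteness of $G$, Proposition \ref{fpf}(ii), and the point count via \eqref{pfc}. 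What your route buys is a slightly stronger intermediate conclusion (that $G$ is in fact an infinitesimal, i.e.\ finite, $k$-group scheme) and independence from the Demazure--Gabriel reference, at the cost of extra steps (descent of finiteness along $\spec\kbar\to\spec k$, the \'etale structure of $G^{\pf}$, and the counting argument), all of which are correctly carried out.
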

\begin{proof}
By \eqref{pf=pf-red}, it suffices to check that $G_{\red}$ is the trivial $k$-group scheme. Since $k$ is perfect, $G_{\red}\times_{\spec k}\e\spec \kbar$ is reduced whence $G_{\red}\times_{\spec k}\e\spec \kbar=(G\times_{\spec k}\spec \kbar\e)_{\red}$ by \cite[Corollary 4.5.12, p.~271]{ega1}. Thus we may assume that $k=\kbar$. By \cite[${\rm VI_{A}}$, 0.2 and Lemma 0.5.2]{sga3}, $G_{\red}$ is a reduced closed $k$-subgroup scheme of $G$. Further, the hypothesis implies that $G_{\red}(k)$ is the trivial group. Now \cite[II, \S5, Proposition 4.3(a), p.~245]{dg} shows that $G_{\red}$ is the trivial $k$-group scheme.
\end{proof}

\begin{proposition}\label{lim-pf} Let $(\e Y_{\lbe\lambda}\lbe)_{\lambda\le\in\le\Lambda}$ be a projective system of $k$-schemes, where $\Lambda$ is a directed set  containing an element $\lambda_{\e 0}$ such that the transition morphisms of $k$-schemes $Y_{\be\mu}\to Y_{\lbe\lambda}$ are affine for $\mu\geq\lambda\geq \lambda_{\le 0}$. Then there exists a canonical isomorphism of perfect $k$-schemes
\[
\left(\varprojlim
Y_{\lambda}\right)^{\!\pf}=\varprojlim\be Y_{\lambda}^{\pf}.
\]
\end{proposition}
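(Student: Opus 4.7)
The plan is to identify both sides as objects representing the same functor on the category of perfect $k$-schemes and then invoke Yoneda. The underlying principle is that the inverse perfection functor is right adjoint to the inclusion $(\mr{Perf}/k)\hookrightarrow(\mr{Sch}/k)$ by \eqref{pfg}, so it automatically commutes with any limits that happen to exist in both categories.

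First, I would verify that $L:=\varprojlim Y_{\lambda}^{\pf}$ exists in $(\mr{Sch}/k)$. By hypothesis the transition morphisms $Y_{\mu}\to Y_{\lambda}$ are affine for $\mu\geq\lambda\geq\lambda_{0}$, and Proposition~\ref{pf-morph}(v) shows that the induced morphisms $Y_{\mu}^{\pf}\to Y_{\lambda}^{\pf}$ are affine as well. Hence the limit exists by the standard result recalled at the beginning of Section~\ref{plim-sec}.

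Next, I would check that $L$ is itself perfect, so that it is a legitimate object of $(\mr{Perf}/k)$. By functoriality of the absolute Frobenius, the family $(F_{Y_{\lambda}^{\pf}})$ is a morphism of projective systems and satisfies $F_{L}=\varprojlim F_{Y_{\lambda}^{\pf}}$. Each $F_{Y_{\lambda}^{\pf}}$ is an isomorphism because $Y_{\lambda}^{\pf}$ is perfect, and the limit of an inverse system of isomorphisms is an isomorphism (its inverse being the limit of the termwise inverses), so $F_{L}$ is an isomorphism and $L$ is perfect.

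Finally, for every perfect $k$-scheme $Z$, I would combine the adjunction \eqref{pfg} (for $W=\spec k$) and the limit formula \eqref{plim} to obtain a chain of canonical bijections, natural in $Z$:
\begin{align*}
\Hom_{\mr{Perf}/k}\big(Z,(\varprojlim Y_{\lambda})^{\pf}\big)
&=\Hom_{\mr{Sch}/k}\big(Z,\varprojlim Y_{\lambda}\big) \\
&=\varprojlim\Hom_{\mr{Sch}/k}(Z,Y_{\lambda}) \\
&=\varprojlim\Hom_{\mr{Perf}/k}(Z,Y_{\lambda}^{\pf}) \\
&=\Hom_{\mr{Perf}/k}(Z,L).
\end{align*}
Yoneda's lemma applied in $(\mr{Perf}/k)$ then supplies the desired canonical isomorphism $(\varprojlim Y_{\lambda})^{\pf}\cong L$. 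The only genuine content lies in the perfectness check; everything else is a formal manipulation of the adjunction, so the main (mild) obstacle is simply ensuring that $L$ actually lives in the category where we want to apply Yoneda.
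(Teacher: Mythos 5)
Your argument is correct and follows the same overall strategy as the paper: existence of $\varprojlim Y_{\lambda}^{\pf}$ via Proposition \ref{pf-morph}(v), perfectness of that limit, and then the chain of bijections combining \eqref{pf} with \eqref{plim} to conclude (the paper states this chain rather than invoking Yoneda explicitly, but that is the same formal step). The only place where you diverge is the perfectness check, which you do abstractly: you assert $F_{L}=\varprojlim F_{Y_{\lambda}^{\pf}}$ by functoriality of the absolute Frobenius. Two small points there. First, $F_{Y_{\lambda}^{\pf}}$ is not a morphism of $k$-schemes (it acts nontrivially on $k$), so to identify $F_{L}$ with the limit of the Frobenii by uniqueness you must use the universal property of $L$ in the category of all schemes (or $\mathbb{F}_{p}$-schemes), not just the property \eqref{plim} in $(\mr{Sch}/k)$; this does hold for limits along affine transition morphisms, and it is exactly what the paper's explicit computation supplies — there $Y_{\lambda}=\spec\s A_{\lambda}$ as relative spectra of quasi-coherent $\s O_{Y_{\lambda_{0}}}$-algebras, $F_{Y_{\lambda}}=\spec F_{\s A_{\lambda}}$, and $F_{Y}=\spec\big(\varinjlim F_{\s A_{\lambda}}\big)$, from which perfectness of the limit is read off directly. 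Second, like the paper, you should first replace $\Lambda$ by the cofinal subset $\{\lambda\geq\lambda_{0}\}$, since affineness of the transition morphisms (and hence existence of the limits) is only guaranteed there. With these two points made explicit, your proof is complete; the abstract route is slightly slicker, while the paper's relative-Spec computation makes the needed universal property self-contained.
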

\begin{proof} By \cite[IX, \S 3, dual of Theorem 1]{mac}, we may replace $\Lambda$ with the cofinal subset $\{\lambda\in \Lambda\,|\, \lambda \geq \lambda_{\le 0}\}$ and hence assume that $\lambda_{0}$ is an initial element of $\Lambda$. For $\lambda\geq\lambda^{\prime}$, let $f_{\lambda\e,\e\lambda^{\prime}}$ denote the transition morphisms of the system $(Y_{\be\lambda})$. For each $\lambda\in\Lambda$, consider the quasi-coherent $\s O_{Y_{\lbe\lambda_{\lle 0}}}\be$-algebra $\s A_{\lambda}=f_{\lambda\e,\e\lambda_{\lle 0}*}\s O_{Y_{\lambda}}$. Then $Y_{\be\lambda}=\spec\s A_{\lambda}$ by \cite[p.~356, line -5]{ega1}. Further, $\s A=\varinjlim \s A_{\lambda}$  is a quasi-coherent $\s O_{Y_{\be\lambda_{\lle 0}}}\be$-algebra and 
$Y=\varprojlim \be Y_{\lambda}=\spec\s A$ by \cite[$\text{IV}_{3}$, Proposition 8.2.3]{ega}. Now, by \eqref{frob-aff} and the construction of $Y_{\be\lambda}=\spec\s A_{\lambda}$ in \cite[proof of Corollary 9.1.7, p.~356]{ega1}, we have $F_{Y_{\be\lambda}}=\spec F_{\!\s A_{\lambda}}$, where $F_{\!\s A_{\lambda}}$ is the absolute Frobenius endomorphism of the
$\s O_{Y_{\be\lambda_{\lle 0}}}\be$-algebra $\s A_{\lambda}$.
Similarly, $F_{Y}=\spec F_{\!\s A}$. Since $F_{\!\s A}=\varinjlim \be F_{\!\s A_{\lambda}}$, we conclude that $F_{Y}=\varprojlim\be F_{Y_{\lambda}}$. Consequently, if $Y_{\be\lambda}$ is perfect for every $\lambda$ (i.e., $F_{Y_{\be\lambda}}$ is an isomorphism), then $Y$ is perfect as well. Now, by
Proposition \ref{pf-morph}(v), $\big(\e Y_{\be\lambda}^{\pf}\e\big)$ is a projective system of 
$k$-schemes with affine transition morphisms. Thus, by the preceding discussion, its limit
$\varprojlim\be Y_{\be\lambda}^{\pf}$ is a perfect $k$-scheme. Further, if $Z$ is an arbitrary perfect $k$-scheme then, by \eqref{pf} and \eqref{plim},
\begin{align*}
\Hom_{\e \mr{Perf}/k}\!\left(Z, \varprojlim\be
Y_{\be\lambda}^{\pf}\right)=\varprojlim\Hom_{\e
\mathrm{Perf}/k}\!\left(Z,Y_{\be\lambda}^{\pf}\le\right)&=\varprojlim
\Hom_{\le\mr{Sch}/k}\!\left(Z,Y_{\be\lambda}\le\right)\\
&=\Hom_{\le
\mr{Sch}/k}\!\left(Z,\varprojlim Y_{\be\lambda}\le\right).
\end{align*}
Consequently, $\left(\varprojlim
Y_{\be\lambda}\right)^{\!\pf}=\varprojlim\be Y_{\be\lambda}^{\pf}$, as claimed.
\end{proof}

To conclude this Section, we show below that, in an appropriate sense, the inverse perfection functor commutes with Weil restriction.

Let $f\colon Z^{\prime}\to Z$ be a  morphism schemes and let $X^{\prime}$ be a  $Z^{\prime}$-scheme. We will say that {\it the Weil restriction of $X^{\prime}$ along $f$ exists} or, more concisely, that $\Re_{Z^{\prime}\be/Z}(X^{\prime}\e)$ exists, if the contravariant functor
$(\mathrm{Sch}/Z\e)\to(\mathrm{Sets}), T\mapsto\Hom_{Z^{\prime}}(T\times_{Z} Z^{\prime}, X^{\prime}\le )$, is  represented by a $Z$-scheme $\Re_{Z^{\prime}\be/Z}(X^{\prime}\e)$ endowed with a morphism of $Z^{\prime}$-schemes $q_{\le X^{\prime}}\colon \Re_{Z^{\prime}\be/Z}(X^{\prime}\e)\times_{Z}Z^{\prime} \to X^{\prime}$
such that the map
\begin{equation}\label{wr}
\Hom_{Z}\e(T,  \Re_{Z^{\prime}\be /Z}(X^{\prime}\e))\overset{\!\sim}{\to}\Hom_{Z^{\prime}}(\e T\!\times_{Z}\!Z^{\prime},X^{\prime} \e), \quad g\mapsto q_{\le X^{\prime}}\circ g_{\le Z^{\prime}} 
\end{equation}
is a bijection. This is the case if $f$ is finite and locally free and $X^{\prime}$ is quasi-projective over $Z^{\prime}$. See \cite[\S 7.6]{blr} and \cite[Appendix A.5]{cgp} for more details. Assume now that $f$ is a morphism of perfect $\mathbb F_{\be p}$-schemes and let $X^{\prime}$ be a perfect $Z^{\prime}$-scheme. Note that, if $T$ is a perfect $Z$-scheme, then $T\be\times_{\be Z}\be Z^{\prime}$ is a perfect $Z^{\prime}$-scheme by Remark \ref{top-prop}(d). We will say that the {\it the Weil restriction of $X^{\prime}$ along $f$  exists in $(\mr{Perf}/Z^{\prime}\e)$ or, more concisely, that $\Re_{\le Z^{\prime}\!/Z}^{\e\pf}(X^{\prime})$ exists,} if the contravariant functor
\begin{equation}\label{weil-pf}
(\mr{Perf}/Z)\to(\mathrm{Sets}),\quad T\mapsto\Hom_{\e\mr{Perf}/Z^{\prime}}\big(T\times_{Z}Z^{\prime},X^{\prime}\big),
\end{equation}
is represented by a perfect $Z$-scheme $\Re_{\le Z^{\prime}\!/Z}^{\e\pf}(X^{\prime})$.

\begin{lemma}\label{pf-wr} Let $f\colon Z^{\prime}\to Z$ be a  morphism of perfect $\mathbb F_{\be p}$-schemes and let $Y$ be a  $Z^{\prime}$-scheme. If $\Re_{\le Z^{\prime}\!/Z}(Y)\in(\mr{Sch}/Z\e)$ exists, then $\Re_{\le Z^{\prime}\!/Z}^{\pf}\big(Y^{\pf}\le\big)$ exists as well and
\[
\Re_{\le Z^{\prime}\!/Z}^{\pf}\big(Y^{\pf}\le\big)=\Re_{\le Z^{\prime}\!/Z}(Y)^{\pf}.
\]
\end{lemma}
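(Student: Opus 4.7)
The plan is to show directly that $\Re_{Z^{\prime}\!/Z}(Y)^{\pf}$ represents the functor \eqref{weil-pf} with $X^{\prime}=Y^{\pf}$ by a chain of adjunction isomorphisms. Let $T$ be an arbitrary perfect $Z$-scheme. Since $Z$ is a perfect $\mathbb F_{\be p}$-scheme, the adjunction \eqref{pfg} (applied with $W=Z$, to the $Z$-scheme $\Re_{Z^{\prime}\!/Z}(Y)$) gives
\[
\Hom_{\mr{Perf}/Z}\!\bigl(T,\Re_{Z^{\prime}\!/Z}(Y)^{\pf}\bigr)\ \simeq\ \Hom_{\mr{Sch}/Z}\!\bigl(T,\Re_{Z^{\prime}\!/Z}(Y)\bigr).
\]
Next, by the defining bijection \eqref{wr} of the Weil restriction,
\[
\Hom_{\mr{Sch}/Z}\!\bigl(T,\Re_{Z^{\prime}\!/Z}(Y)\bigr)\ \simeq\ \Hom_{Z^{\prime}}\!\bigl(T\times_{Z}Z^{\prime},Y\bigr).
\]
Finally, by Remark \ref{top-prop}(d), the fibre product $T\times_{Z}Z^{\prime}$ is a perfect $Z^{\prime}$-scheme (as both factors and the base are perfect, one also uses that $(T\times_{Z}Z^{\prime})^{\pf}=T^{\pf}\times_{Z^{\pf}}(Z^{\prime})^{\pf}=T\times_{Z}Z^{\prime}$). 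Hence a second application of \eqref{pfg}, this time with $W=Z^{\prime}$ and the perfect source $T\times_{Z}Z^{\prime}$, yields
\[
\Hom_{Z^{\prime}}\!\bigl(T\times_{Z}Z^{\prime},Y\bigr)\ \simeq\ \Hom_{\mr{Perf}/Z^{\prime}}\!\bigl(T\times_{Z}Z^{\prime},Y^{\pf}\bigr).
\]

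Concatenating these three bijections produces an isomorphism
\[
\Hom_{\mr{Perf}/Z}\!\bigl(T,\Re_{Z^{\prime}\!/Z}(Y)^{\pf}\bigr)\ \simeq\ \Hom_{\mr{Perf}/Z^{\prime}}\!\bigl(T\times_{Z}Z^{\prime},Y^{\pf}\bigr)
\]
that is natural in the perfect $Z$-scheme $T$. This naturality, which is immediate from the functoriality of each of the three bijections involved, shows that $\Re_{Z^{\prime}\!/Z}(Y)^{\pf}$ represents the functor \eqref{weil-pf} with $X^{\prime}=Y^{\pf}$, so $\Re_{Z^{\prime}\!/Z}^{\pf}(Y^{\pf})$ exists and is canonically identified with $\Re_{Z^{\prime}\!/Z}(Y)^{\pf}$, as desired.

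There are no serious obstacles in this argument; the only point to be careful about is the bookkeeping of the base schemes in the two uses of \eqref{pfg}, which require different values of $W$ (namely $Z$ and $Z^{\prime}$) and different source schemes ($T$ and $T\times_{Z}Z^{\prime}$). In each case, the source must be a perfect scheme over the chosen base, which is why invoking Remark \ref{top-prop}(d) to see that $T\times_{Z}Z^{\prime}\in(\mr{Perf}/Z^{\prime})$ is essential before the second adjunction can be applied.
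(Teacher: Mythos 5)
Your proposal is correct and follows essentially the same route as the paper: the same three-step chain of bijections, using the adjunction \eqref{pfg}/\eqref{pfc} over the bases $Z$ and $Z^{\prime}$ together with the defining property \eqref{wr} of the Weil restriction, and invoking Remark \ref{top-prop}(d) to ensure $T\times_{Z}Z^{\prime}$ is perfect. No issues.
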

\begin{proof} If $T$ is a perfect $Z$-scheme, then,  by  \eqref{pfc} and \eqref{wr},
\[
\begin{array}{rcl}
\Hom_{\e\mr{Perf}/Z^{\prime}}\big(T\times_{Z} Z^{\prime},Y^{\pf}\e\big)\!=\!\Hom_{\e\mr{Sch}/Z^{\prime}}(T\times_{Z} Z^{\prime},Y)&\!=\!&\Hom_{\e\mr{Sch}/Z}(T,\Re_{\le Z^{\prime}/Z}(Y))\\
&\!=\!&\Hom_{\e\mr{Perf}/Z}\big(T,\Re_{\le Z^{\prime}/Z}(Y)^{\pf}\e\big),
\end{array}
\]
whence the lemma follows. 
\end{proof}

\section{Exactness properties}\label{exa}
Let $k$ be a perfect field of positive characteristic and let $\kbar$ be an algebraic closure of $k$.

Let $(\le\mathrm{Perf}/k)_{\fpqc}$ be the category $(\le\mathrm{Perf}/k)$ endowed
with the fpqc topology, i.e., a family of morphisms $\{Y_{\alpha}\to Y\}$ in
$(\le\mathrm{Perf}/k)$ is a covering in $(\le\mathrm{Perf}/k)_{\fpqc}$ if it is a
covering in $(\mathrm{Sch}/k)_{\fpqc}$. The presheaf represented by a perfect and commutative $k$-group scheme is a sheaf on $(\mathrm{Sch}/k)_{\fpqc}$ \cite[Theorem 2.55, p.~34]{v} and therefore also on $(\le\mathrm{Perf}/k)_{\fpqc}$.

\begin{theorem}\label{ex-pf}
Let $1\to F\to G\to H\to 1$ be a sequence of $k$-group schemes which is exact for the fpqc topology on $(\mathrm{Sch}/k)$. Then the sequence of perfect $k$-group schemes
\[
1\to F^{\le\pf}\to G^{\le\pf}\to H^{\le\pf}\to 1
\]
is exact for the fpqc topology on $(\le\mathrm{Perf}/k)$.
\end{theorem}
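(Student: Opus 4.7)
My plan is to verify the two conditions defining exactness for the fpqc topology on $(\mr{Perf}/k)$ applied to the sequence $1 \to F^{\pf} \to G^{\pf} \to H^{\pf} \to 1$: namely, the identification of $F^{\pf}$ with the scheme-theoretic kernel of $q^{\pf}\colon G^{\pf} \to H^{\pf}$ (where $q\colon G \to H$ denotes the given morphism), and the local surjectivity of $h_{\spec k}(q^{\pf})$ on $(\mr{Perf}/k)_{\fpqc}$.

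For the kernel identification, I would apply Lemma \ref{surj} to the hypothesis to obtain $F = \krn q = G \times_{H} \spec k$. Since inverse perfection commutes with fibre products by Remark \ref{top-prop}(d), and $(\spec k)^{\pf} = \spec k$ because $k$ is perfect, this yields
\[
F^{\pf} = (G \times_{H} \spec k)^{\pf} = G^{\pf} \times_{H^{\pf}} \spec k = \krn q^{\pf},
\]
as required.

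For local surjectivity, let $T$ be a perfect $k$-scheme and $s \in H^{\pf}(T)$. Via \eqref{pfc} I regard $s$ equivalently as a morphism $s\colon T \to H$ of $k$-schemes. By the given exactness hypothesis applied to $s$, there exist an fpqc covering $\{U_{i} \to T\}$ in $(\mr{Sch}/k)$ and $k$-morphisms $g_{i}\colon U_{i} \to G$ satisfying $q \circ g_{i} = s|_{U_{i}}$ for every $i$. Passing to inverse perfections produces morphisms $g_{i}^{\pf}\colon U_{i}^{\pf} \to G^{\pf}$ and $\pi_{i}^{\pf}\colon U_{i}^{\pf} \to T^{\pf} = T$ (the final equality holding because $T$ is perfect). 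By Remarks \ref{top-prop}(b) and \ref{top-prop}(c), the family $\{U_{i}^{\pf} \to T\}$ is an fpqc covering in $(\mr{Perf}/k)$, and functoriality of inverse perfection gives $q^{\pf} \circ g_{i}^{\pf} = (q \circ g_{i})^{\pf} = (s \circ \pi_{i})^{\pf} = s^{\pf} \circ \pi_{i}^{\pf}$, i.e.\ the restriction of the adjoint $s^{\pf}\colon T \to H^{\pf}$ to $U_{i}^{\pf}$. Hence each $g_{i}^{\pf}$ is the desired local lift.

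The key inputs are Lemma \ref{pf-prod} (used through Remark \ref{top-prop}(d)) for the kernel identification, and Remark \ref{top-prop}(c) to propagate fpqc covers through the inverse perfection functor; the remainder is formal adjunction bookkeeping via \eqref{pfg}--\eqref{pfc}. I do not anticipate a serious obstacle beyond verifying that this bookkeeping is coherent, in particular that $T^{\pf} = T$ is the genuine identity so that the adjoint $s^{\pf}$ and the scheme-theoretic map $s$ really do play interchangeable roles.
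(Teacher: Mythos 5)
Your proof is correct and is essentially the paper's argument: the local-surjectivity step --- regarding a point of $H^{\lle\pf}(T)$ as a morphism $T\to H$ via \eqref{pfc}, lifting it fpqc-locally in $(\mathrm{Sch}/k)$ using the hypothesis, and then perfecting the cover via Remarks \ref{top-prop}(b),(c) with $T^{\pf}=T$ --- is exactly what the paper does. The only (harmless) divergence is at left-exactness, where the paper invokes the adjunction \eqref{pf} directly, while you identify $F^{\le\pf}$ with $\krn q^{\pf}$ through Lemma \ref{surj}, Remark \ref{top-prop}(d) and $(\spec k)^{\pf}=\spec k$; both routes are valid.
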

\begin{proof}
Left-exactness follows from \eqref{pf}. Thus we are reduced to checking that $h_{\lbe S}(q^{\pf})$ is locally surjective in $(\mathrm{Perf}/k)_{\fpqc}^{\sh}$, where $S=\spec k$ and $q\colon G\to H$ is the given morphism.
Let $Z$ be a perfect $k$-scheme and let $f\in\Hom_{\le \mathrm{Perf}/k}\big(Z,H^{\pf}\le\big)$. By \eqref{pf}, $f=e^{\pf}$ for a unique $e\in \Hom_{\le \mathrm{Sch}/k}\big(Z,H\le\big)$. Now, since  $h_{\lbe S}(q^{\pf})$ is locally surjective in $(\mathrm{Sch}/k)_{\fpqc}^{\sh}$, there exist an fpqc covering $\{u_{i}\colon Y_{i}\to Z\}$ in $(\mathrm{Sch}/k)$ and $k$-morphisms $g_{\le i}\colon Y_{i}\to G$ such that $q\circ g_{\le i}=
e\circ u_{i}$ for every $i$. Since $\{u^{\pf}_{i}\colon Y^{\pf}_{i}\to Z\}$ is an fpqc covering in $(\le\mathrm{Perf}/k)$ by Remark \ref{top-prop}(c) and $q^{\pf}\be\circ\be g_{\le i}^{\le \pf}=e^{\pf}\circ \be u_{i}^{\pf}=f\be\circ\be u_{i}^{\pf}$ for every $i$, the proof is complete.
\end{proof}

\begin{corollary}\label{isg}
Let $f\colon G\to H$ be an isogeny of $k$-group schemes of finite type with geometrically connected kernel. Then $f^{\pf}\colon G^{\pf}\to H^{\pf}$ is an isomorphism.
\end{corollary}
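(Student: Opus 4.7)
The plan is to combine Corollary~\ref{ff}, Lemma~\ref{pf=1} and Theorem~\ref{ex-pf}. First I would set $N=\krn f$, a finite $k$-group scheme (in particular, locally of finite type) which is geometrically connected by hypothesis. Then $N\times_{\spec k}\spec\kbar$ is a finite connected scheme over the algebraically closed field $\kbar$, hence the spectrum of a finite local $\kbar$-algebra with residue field $\kbar$; such a ring admits only one $\kbar$-valued point, so $N\be(\kbar)$ consists of the identity alone. Lemma~\ref{pf=1} then yields that $N^{\pf}$ is the trivial $k$-group scheme.

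Next, I would apply Corollary~\ref{ff} to $f$. Since $f$ is an isogeny of $k$-group schemes of finite type, it is flat, and it is surjective on $\kbar$-points because an isogeny is a surjective morphism of schemes. Thus the sequence
\[
1\to N\to G\overset{\!f}\to H\to 1
\]
is exact for the fpqc topology on $(\mr{Sch}/k)$. Feeding this into Theorem~\ref{ex-pf} produces the fpqc-exact sequence of perfect $k$-group schemes
\[
1\to N^{\pf}\to G^{\pf}\overset{\!f^{\pf}}\to H^{\pf}\to 1
\]
on $(\mr{Perf}/k)$.

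To finish, I set $S=\spec k$ and observe that the triviality of $N^{\pf}$ forces $h_{\lbe S}(f^{\pf})$ to be simultaneously a monomorphism (its kernel is $h_{\lbe S}(N^{\pf})=h_{\lbe S}(\spec k)$, the trivial sheaf) and an epimorphism of sheaves on $(\mr{Perf}/k)_{\fpqc}$, hence an isomorphism of sheaves. The full faithfulness of the Yoneda embedding $h_{\lbe S}\colon (\mr{Perf}/k)\to (\mr{Perf}/k)^{\sh}_{\fpqc}$ then forces $f^{\pf}$ itself to be an isomorphism in $(\mr{Perf}/k)$. The only point that demands real attention is verifying the flatness of the isogeny $f$ (needed to apply Corollary~\ref{ff}); granted this, the structural results developed in the previous sections do all of the remaining work.
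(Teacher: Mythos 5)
Your argument is correct and follows essentially the same route as the paper: you show the kernel has trivial $\kbar$-points using finiteness plus geometric connectedness, invoke Lemma \ref{pf=1} and Theorem \ref{ex-pf} (with Corollary \ref{ff} supplying the fpqc-exactness of $1\to N\to G\to H\to 1$), and conclude that a sheaf map with trivial kernel which is locally surjective is an isomorphism. The extra steps you spell out---the appeal to Corollary \ref{ff} (where flatness and surjectivity on $\kbar$-points come from the definition of isogeny, the latter via the Nullstellensatz applied to the nonempty finite-type fibres over $\kbar$) and the mono-plus-epi-implies-iso argument via the Yoneda embedding---are exactly what the paper leaves implicit in its terser proof.
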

\begin{proof} Let $F=\krn f$ and write $\bar{F}$ for the connected $\kbar$-scheme $F\times_{k}\spec \kbar$. By the theorem and Lemma \ref{pf=1}, it suffices to check that $F\lbe\big(\e\kbar\e\big)$ is the trivial group. Since $\bar{F}\to\spec\kbar$
a finite morphism of schemes, $|\bar{F}|$ is a finite connected topological space \cite[II, Corollary 6.1.7]{ega}, i.e., a one-point space. Since $F\lbe\big(\e\kbar\e\big)$ may be identified with a subset of $|\bar{F}\e|$ by \cite[(3.5.5), p.~243]{ega1}, the corollary is clear. 
\end{proof}

\begin{proposition}\label{ex-pf2}
Let $0\to F\stackrel{f}{\to} G\stackrel{g}{\to} H\to 0$ be a complex of commutative $k$-group schemes locally of finite type. Assume that the following conditions hold:
\begin{enumerate}
\item[(i)] $f$ is quasi-compact,
\item[(ii)] $g$ is flat,
\item[(iii)] the induced sequence of abelian groups $0\to F\lbe(\e\kbar\e)\to G\lbe(\e\kbar\e)\to H\lbe(\e\kbar\e)\to 0$ is exact.
\end{enumerate}
Then the induced complex of perfect and commutative $k$-group schemes
\[
0\to F^{\le\pf}\to G^{\le\pf}\to H^{\le\pf}\to 0
\]
is exact for the fpqc topology on $(\le\mathrm{Perf}/k)$.
\end{proposition}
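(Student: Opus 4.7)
The plan is to reduce to Theorem \ref{ex-pf} by replacing $F$ with the scheme-theoretic kernel $K:=\ker g$ and then identifying $F^{\pf}$ with $K^{\pf}$. By (ii) and Proposition \ref{fppf}(i), $g$ is fppf; by (iii) together with Proposition \ref{fppf}(ii), $g$ is surjective. Corollary \ref{ff} then yields that $0\to K\to G\to H\to 0$ is exact for the fpqc topology on $(\mr{Sch}/k)$, and Theorem \ref{ex-pf} gives the fpqc exactness of $0\to K^{\pf}\to G^{\pf}\to H^{\pf}\to 0$ on $(\mr{Perf}/k)$. The complex condition $g\circ f=0$ provides a factorization $f=\iota\circ\tilde f$ with $\iota\colon K\hookrightarrow G$ the canonical closed immersion and $\tilde f\colon F\to K$ a morphism of commutative $k$-group schemes locally of finite type; by (i) and the quasi-separatedness of $\iota$, $\tilde f$ is quasi-compact, and by (iii) combined with $K(\kbar)=\ker(G(\kbar)\to H(\kbar))$, $\tilde f(\kbar)$ is bijective. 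The proof then reduces to showing $\tilde f^{\pf}\colon F^{\pf}\to K^{\pf}$ is an isomorphism.

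For the monomorphism part of $\tilde f^{\pf}$, set $N:=\ker\tilde f$: it is a closed subscheme of $F$, hence locally of finite type, and $N(\kbar)=0$. By Lemma \ref{pf=1}, $N^{\pf}=0$, and by Remark \ref{top-prop}(d), $\ker(\tilde f^{\pf})=N^{\pf}=0$. Thus $\tilde f^{\pf}$ is a monomorphism of fpqc sheaves on $(\mr{Perf}/k)$.

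For the epimorphism part, which is the main obstacle, I would introduce the scheme-theoretic image $F'\hookrightarrow K$ of $\tilde f$ (well-defined by quasi-compactness), factoring $\tilde f$ as $F\twoheadrightarrow F'\hookrightarrow K$ with $F\to F'$ scheme-theoretically dominant. The equality $F(\kbar)=K(\kbar)$ forces $F'(\kbar)=K(\kbar)$; since closed points are very dense in schemes locally of finite type over the perfect field $k$ and correspond (up to Galois) to $\kbar$-points, one deduces $F'_{\red}=K_{\red}$ as closed subschemes of $K$. Therefore $F'^{\pf}=K_{\red}^{\pf}=K^{\pf}$ by \eqref{pf=pf-red}. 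It remains to show that $F\to F'$ induces an isomorphism $F^{\pf}\to F'^{\pf}$. Working Zariski-locally on affine opens in $F$ and $F'$, this reduces to showing that an injection $B\hookrightarrow A$ of finitely generated $k$-algebras whose induced map $\spec A\to\spec B$ is bijective on $\kbar$-valued points induces an isomorphism $B^{\pf}\to A^{\pf}$. Bijectivity on $\kbar$-points makes $\spec A\to\spec B$ quasi-finite; Zariski's Main Theorem together with a Jacobson-density argument then forces $A$ to be finite over $B$, and since the residue field extensions at all closed points are trivial, $B\subseteq A$ is a purely inseparable finite extension, whence $B^{\pf}=A^{\pf}$. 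The hard step is combining this local finite/purely inseparable analysis with the global scheme-theoretic image construction to obtain the isomorphism $F^{\pf}\to F'^{\pf}$.
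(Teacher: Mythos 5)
Your reduction to showing that $\tilde f^{\pf}\colon F^{\pf}\to K^{\pf}$ is an isomorphism, the monomorphism step via $N=\krn\tilde f$, Lemma \ref{pf=1} and Remark \ref{top-prop}(d), and the observation that $F'_{\red}=K_{\red}$ (hence $F'^{\,\pf}=K^{\pf}$ by \eqref{pf=pf-red}) are all fine. The gap is in the last step, and the purely scheme-theoretic local claim you invoke there is false. An injection $B\hookrightarrow A$ of finitely generated $k$-algebras that is bijective on $\kbar$-points need not make $A$ finite over $B$, and need not induce an isomorphism $B^{\pf}\to A^{\pf}$: take $B=k[x]$ and $A=k[x,x^{-1}]\times k$ with $f\mapsto (f,f(0))$ (geometrically $\G_{m}\sqcup\spec k\to \A^{1}$, the open immersion together with the origin). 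This is injective, quasi-finite, scheme-theoretically dominant and bijective on $\kbar$-points, yet $A$ is not finite over $B$ — Zariski's Main Theorem only provides an open immersion into a finite $B$-scheme, not finiteness — and $A^{\pf}$ has a nontrivial idempotent while $B^{\pf}$ is a domain, so $B^{\pf}\to A^{\pf}$ is not an isomorphism. So ``scheme-theoretically dominant $+$ bijective on $\kbar$-points'' is not enough to identify perfections; some input beyond ring injectivity and point counting is indispensable. (There is also the minor issue that $F\to F'$ need not be affine, so the reduction to a single ring injection already requires an argument.)

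The missing ingredient is exactly the group-scheme quotient machinery the paper uses. Since $f$ is quasi-compact, by SGA3, $\mathrm{VI_{A}}$, the image $\img f$ exists as a closed subgroup scheme of $\krn g$ and $f$ induces an isomorphism $F/\krn f\simeq\img f$, with the quotient morphism $F\to\img f$ faithfully flat and locally of finite presentation; hence $0\to\krn f\to F\to\img f\to 0$ is fpqc-exact by Corollary \ref{ff}, and since $(\krn f)(\kbar)=0$ by (iii), Theorem \ref{ex-pf} together with Lemma \ref{pf=1} yields $F^{\pf}\simeq(\img f)^{\pf}$. This flatness of $F\to\img f$ is a theorem about group schemes over a field and is precisely what your local finite/purely-inseparable analysis cannot replace. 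Once this is granted, your way of finishing — $\img f$ and $\krn g$ have the same $\kbar$-points, hence (after base change to $\kbar$ and descent, using that $k$ is perfect) the same underlying reduced closed subscheme, hence the same perfection — is a legitimate variant of the paper's ending, which instead forms the quotient $\krn g/\e\img f$, notes its $\kbar$-points vanish, and applies Lemma \ref{pf=1}; note also that the paper first reduces to $k=\kbar$ by fpqc descent, which you largely avoid by arguing with $\kbar$-points throughout.
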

\begin{proof} By faithfully flat and quasi-compact descent \cite[${\rm{IV}}_{2}$, Proposition 2.7.1]{ega}, it suffices to check the exactness of the indicated complex after extending the base from $k$ to $\kbar$. Then, using Remark \ref{top-prop}(d) and noting that conditions (i)-(iii) are stable under this base change, we may assume that $k=\kbar$. Since $g$ is flat and $g(k)\colon G(k)\to H(k)$ is surjective, the sequence $0\to(\krn g)^{\pf}\to G^{\pf}\to H^{\pf}\to 0$ is exact by Corollary \ref{ff} and Proposition \ref{ex-pf}. On the other hand, by (i) and \cite[${\rm VI_{A}}$, comments following Proposition 5.4.1]{sga3}, $F/\le\krn f$ and $\img f$ exist in the category of commutative $k$-group schemes and $f$ induces an isomorphism $F/\le\krn f\simeq\img f$, where $\img f$ is a closed subgroup scheme of $\krn g$. Thus, since $(\krn f)(k)=0$ by (iii), Proposition \ref{ex-pf} and Lemma \ref{pf=1} show that $f^{\pf}$ induces an isomorphism of perfect and commutative $k$-group schemes $F^{\e\pf}\simeq(\img f)^{\pf}$. It remains to show that $(\img f)^{\pf}=(\krn g)^{\pf}$. Since $\krn g\to \krn g/\e\img f$ is faithfully flat and locally of finite presentation \cite[${\rm VI_{A}}$, Theorem 3.3.2(ii)]{sga3}, $(\krn g)(k)\to(\krn g/\e\img f)(k)$ is surjective by \cite[$\text{IV}_{1}$, Proposition 1.3.7]{ega}. Thus Corollary \ref{ff} shows that the sequence $0\to \img f\to \krn g\to \krn g/\e\img f\to 0$ is exact for the fpqc topology on $(\le\mathrm{Sch}/k)$. Now Proposition \ref{ex-pf} reduces the proof to checking that
$(\krn g/\e\img f)^{\pf}=0$. Since $(\krn g)(k)=\krn g(k)=\img f(k)\subseteq (\img f)(k)$ by (iii), the map $(\img f)(k)\to(\krn g)(k)$ is surjective and therefore $(\krn g/\e\img f)(k)=0$. Now Lemma \ref{pf=1} completes the proof.
\end{proof}

\end{document}